\pgfplotsset{compat=1.15}
\newtheorem{theorem}{Theorem}[section]
\newtheorem{prop}[theorem]{Proposition}
\newtheorem{conj}[theorem]{Conjecture}
\newtheorem{lemma}[theorem]{Lemma}
\newtheorem{cor}[theorem]{Corollary}
\theoremstyle{definition}
\newtheorem{eg}[theorem]{Example}
\newtheorem{quest}[theorem]{Question}
\newtheorem{defin}[theorem]{Definition}
\newtheorem{remark}[theorem]{Remark}
\newtheorem{notation}[theorem]{Notation}
\newcommand{\rk}{\operatorname{rk}}
\newcommand{\col}{\operatorname{col}}
\newcommand{\cw}{\operatorname{cw}}
\newcommand{\ccw}{\operatorname{ccw}}
\newcommand{\pis}{{\pi,\sigma}}
\title{Characterizing positroid quotients of uniform matroids}
\author{Zhixing Chen\thanks{Fudan University, Shanghai \href{mailto:20307110030@fudan.edu.cn}{\tt 20307110030@fudan.edu.cn}}
\and  Yumou Fei\thanks{Peking University, Beijing \href{mailto:feiym2002@stu.pku.edu.cn}{\tt feiym2002@stu.pku.edu.cn}} \and Jiyang Gao\thanks{Harvard University, Cambridge, MA 02138 \href{mailto:jgao@math.harvard.edu}{\tt jgao@math.harvard.edu}}
\and Yuxuan Sun\thanks{University of Minnesota Twin Cities, Minneapolis, MN 55455 \href{mailto:sun00816@umn.edu}{\tt sun00816@umn.edu}}
\and Yuchong Zhang\thanks{University of Michigan, Ann Arbor, MI 48104 \href{mailto:zongxun@umich.edu}{\tt zongxun@umich.edu}}}
\begin{document}

\maketitle

\begin{abstract}

We study two-step flag positroids $(P_1, P_2)$, where $P_1$ is a quotient of $P_{2}$. We provide a complete characterization of all two-step flag positroids that contain a uniform matroid, extending and completing a partial result by Benedetti, Ch\'avez, and Jim\'enez. To contrast general positroids with the special case of lattice path matroids, we show that the containment relations of Grassmann necklaces and conecklaces fully characterize flag lattice path matroids, but are insufficient for general flag positroids. Additionally, we prove that the decorated permutations of any elementary quotient pair are related by a cyclic shift, resolving a conjecture of Benedetti, Ch\'avez and Jim\'enez.

\textbf{keywords:} positroids, flag positroids, matroid quotients, lattice path matroids, Grassmann necklaces, decorated permutations
\end{abstract}

\section{Introduction}
Positroids are an important class of matroids that can be represented by full-rank matrices with nonnegative maximal minors. They were first introduced by Postnikov in his study of the totally nonnegative Grassmannian \cite{Pos06}. The applications of positroids span several domains, including cluster algebra \cite{PS14} and physics \cite{KW11, Arkani21}.

Positroids also have many nice properties. They are closed under matroid duality and cyclic shifts of the ground set. Moreover, they are in bijection with many combinatorial objects, including Grassmann necklaces and decorated permutations \cite{Pos06,Oh11}.

An ordered pair of matroids $(M_1,M_2)$ is called a \emph{(two-step) flag matroid} if $M_1$ is a quotient of $M_2$, which means that every circuit of $M_2$ is a union of circuits of $M_1$. If both $M_1$ and $M_2$ are positroids, then the pair is called a \emph{(two-step) flag positroid}. 

Characterizing flag positroids via circuits is computationally challenging, as enumerating all unions of circuits requires exponential time. Since positroids can be represented concisely by many combinatorial objects such as Grassmann necklaces, it is natural to seek a more practical combinatrorial criterion for flag positroids.

\begin{quest}[\cite{Oh11}]
    Find a concise combinatorial criterion of flag positroids.
\end{quest}

This question was first posed in \cite[Section 7]{Oh11}, prompting numerous attempts to address it \cite{Oh17, Ben19, Ben22, Jon22}. In particular, several necessary conditions are identified for two-step flag positroids, such as \cite[Proposition 6.2]{Oh17} and \cite[Remark 36]{Ben19}. In contrast, finding sufficient conditions appears to be more challenging. To the authors' knowledge, existing sufficient conditions mainly focus on special cases: \cite{Ben22} provides a necessary and sufficient condition for flag lattice path matroids (a subclass of positroids), and \cite{Ben19} gives a sufficient but not necessary condition for elementary quotients (that is, quotients that decrease the rank by at most 1) of uniform matroids.

Our main result gives a necessary and sufficient characterization of all two-step flag positroids that contain a uniform matroid, based on their CW-arrows (a close relative of Grassmann necklaces, first defined by \cite{Oh17}).

\begin{theorem}\label{thm:uniformCW}
    Given integers $0\leq r\leq k<n$. Let $M$ be a positroid of rank $k-r$ on $[n]$, and let $U_{k,n}$ be the uniform matroid of rank $k$. Then $(M,U_{k,n})$ is a flag positroid if and only if the union of any $r+1$ CW-arrows of $M$ has cardinality at least $k+1$.
\end{theorem}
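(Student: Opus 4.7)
Since $U_{k,n}$ is always a positroid, the pair $(M, U_{k,n})$ is a flag positroid if and only if $M$ is a matroid quotient of $U_{k,n}$. My plan is to translate this quotient condition into a condition on hyperplanes of $M$, and then bridge to the CW-arrow condition. Using the classical characterization that $M_1$ is a matroid quotient of $M_2$ on the same ground set if and only if every flat of $M_1$ is a flat of $M_2$, together with the fact that the proper flats of $U_{k,n}$ are exactly the subsets of $[n]$ of size at most $k-1$, one reduces to requiring that every proper flat of $M$ has size at most $k-1$. Since every proper flat is contained in a hyperplane of $M$, this further reduces to: $M$ has no hyperplane of size $\geq k$.

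The heart of the proof is then the following bridging lemma: $M$ has a hyperplane of size $\geq k$ if and only if some $r+1$ CW-arrows of $M$ have union of size $\leq k$. Granting this, the theorem follows immediately by negation: every hyperplane has size $\leq k-1$ iff every $r+1$ CW-arrows union to size $\geq k+1$. To prove the lemma, I would exploit the cycle structure of the decorated permutation $\pi$ of $M$: in one direction, a hyperplane $H$ of size $\geq k$ must absorb enough of $\pi$ (in particular, all loops and certain short cycle segments of $\pi$) that one can select $r+1$ CW-arrows whose union is contained essentially within $H$; in the other direction, $r+1$ CW-arrows with small union witness a rank drop whose closure in $M$ produces a hyperplane of size $\geq k$. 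Small examples (e.g., $M$ of rank $1$ on $[4]$ with loops $\{3,4\}$, where the hyperplane $\{3,4\}$ coincides with $C_3 \cup C_4$) illustrate how the loop-CW-arrows at positions inside $H$ assemble the bad union, and how conversely a bad union identifies the candidate hyperplane.

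The main technical obstacle is the bridging lemma itself, where the numerical relationship between hyperplane size and CW-arrow union size is not a simple complement and must be matched via a careful positroid-theoretic analysis: the CW-arrows associated to a hyperplane $H$ may extend slightly beyond $H$ along the cyclic structure of $\pi$, so the correct bookkeeping is delicate. A natural strategy is induction on $r$, taking the elementary-quotient case $r = 1$ (partially resolved by Benedetti--Ch\'avez--Jim\'enez) as the base, and using the fact that positroid quotients of $U_{k,n}$ admit positroid intermediates to perform the inductive step. An alternative is a direct, non-inductive analysis of the cycles of $\pi$ and their contributions to both hyperplanes and CW-arrows; this yields a cleaner uniform treatment at the cost of more combinatorial case-checking.
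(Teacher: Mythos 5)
Your opening reduction is correct and is essentially a repackaging of what the paper does with \Cref{lem:rank_char_of_quotient}: since the flats of $U_{k,n}$ are $[n]$ together with all sets of size at most $k-1$, the flat (or rank-function) characterization of quotients turns ``$M$ is a quotient of $U_{k,n}$'' into ``no hyperplane of $M$ has size $\geq k$,'' equivalently ``every $k$-subset has rank exactly $k-r$.'' One direction of your bridging lemma is also within reach: from $r+1$ CW-arrows with union of size $\leq k$ one gets a $k$-set $A$ with $\cw_M(A)\geq r+1$, and the rank bound $\rk_M(A)\leq |A|-\cw_M(A)$ (Oh--Xiang's formula for cyclic intervals plus submodularity, as in \Cref{lem:rankupperbound}) produces the large hyperplane. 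That matches the paper's ``only if'' direction.

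The genuine gap is the other direction of your bridging lemma --- that a hyperplane of size $\geq k$ forces $r+1$ CW-arrows with union of size $\leq k$ --- which is exactly the hard ``if'' direction of \Cref{thm:uniformCW}, and your proposal contains no argument for it, only two strategies that do not work as stated. The inductive route needs (a) a base case $r=1$ that is not available in the literature (Benedetti--Ch\'avez--Jim\'enez give only a sufficient, not necessary, condition for elementary quotients of $U_{k,n}$; the full $r=1$ statement is \Cref{thm:uniform}, itself a corollary of the theorem you are trying to prove), and (b) the assertion that positroid quotients of $U_{k,n}$ factor through positroid intermediates of consecutive ranks, which is not an established fact and would itself require proof; moreover, even granting both, you would still have to relate the CW-arrows of $M$ to those of the intermediate positroid and show the numerical conditions compose, which is nontrivial. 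The ``cycle structure of $\pi$'' route founders on the same obstacle you acknowledge: a hyperplane is an arbitrary subset, not a cyclic interval, and there is no direct way to read off (or lower-bound) its rank from the CW-arrows without new machinery. This is precisely where the paper's proof does real work: it passes to the dual matroid, invokes the McAlmon--Oh rank formula for arbitrary subsets (\Cref{lem:main}) to express $\rk_{M^*}([n]\setminus A)$ via CCW-counts of complements of the parts of a partition, translates back by duality, and controls those counts with the lifting statement \Cref{cor:trickle_up} (via \Cref{lem:cwA-1}), plus a separate argument excluding coloops. None of this, or a substitute for it, appears in your proposal, so the central implication remains unproved.
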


We note that our criterion is verifiable in $O(n^{3})$ time. The proof of \Cref{thm:uniformCW} employs a novel rank analysis based on the results of \cite{mcalmon2020rank}. We also note that both the statement and the proof of \Cref{thm:uniformCW} differ significantly from the partial characterization in \cite{Ben19}.

\begin{remark}\label{rmk:nonlocality}
Consider the case where $k\ll n$. Then each CW-arrow is a clockwise arrow of length no more than $k$ on the circle of perimeter $n$. While each CW-arrow is a ``local structure" on the circle (that is, involving only a few positions close to each other), \Cref{thm:uniformCW} considers a union of multiple CW-arrows that can be arbitrarily far away from each other. Since whether one positroid is a quotient of another is determined by the combined effects of distantly separated local structures, characterizing flag positroids requires \emph{non-local information} (in terms of positions on the circle). This explains why previous attempts at characterizing flag positroids such as \cite[Conjecture 6.3]{Oh17} have been unsuccessful (we provide a simple counterexample for \cite[Conjecture 6.3]{Oh17} in \Cref{cex:Oh conj}). The authors view this insight as a main conceptual contribution of this paper.
\end{remark}

A canonical ``local property'' of flag positroids is the ``Grassmann necklace containment condition'' \cite[Remark 36]{Ben19}. In contrast to \Cref{rmk:nonlocality}, we show that the containment of both Grassmann necklace and Grassmann conecklace \emph{is} sufficient to characterize the quotients among lattice path matroids.

\begin{theorem}\label{thm:LPM}
Let $M$ and $N$ be lattice path matroids. Then $M$ is a quotient of $N$ if and only if the Grassmann necklace of $N$ contains the Grassmann necklace of $M$ entrywise, and the Grassmann conecklace of $N$ contains the Grassmann conecklace of $M$ entrywise.
\end{theorem}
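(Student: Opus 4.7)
The plan is to prove the two directions separately, with the backward direction reducing to the characterization of flag lattice path matroids established in \cite{Ben22}.

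The forward direction is standard. If $M$ is a quotient of $N$, then a routine argument from the circuit definition of matroid quotient yields $I_i(N) \supseteq I_i(M)$ entrywise for the Grassmann necklaces; this is in fact the ``Grassmann necklace containment condition'' noted in \cite[Remark 36]{Ben19}. For the conecklace part, I would invoke matroid duality: $M$ is a quotient of $N$ if and only if $N^{*}$ is a quotient of $M^{*}$, the dual of an LPM is itself an LPM, and the Grassmann conecklace of a positroid corresponds (after a reversal of the cyclic order) to the Grassmann necklace of its dual. Applying the necklace containment to the dual pair then yields the conecklace containment.

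The backward direction is the substantive content. The strategy is to decode both necklace and conecklace of an LPM in terms of its pair of bounding lattice paths $(L_M, U_M)$, and then match the resulting path conditions against the quotient criterion of \cite{Ben22}. Concretely, I would first establish a dictionary: for an LPM the entries $I_i(M)$ of the Grassmann necklace can be read directly off the upper bounding path $U_M$ (as the lex-minimum basis starting at $i$ is forced to hug $U_M$ as closely as possible in the cyclic order), while the entries $J_i(M)$ of the conecklace play the symmetric role for the lower path $L_M$. Given this dictionary, entrywise containment of the necklaces translates to a pointwise ``$U_N$ lies weakly above $U_M$'' condition, and entrywise containment of the conecklaces translates to the companion ``$L_N$ lies weakly below $L_M$'' condition. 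The conjunction of these two path inequalities is precisely the characterization of flag LPMs from \cite{Ben22}, which then delivers the quotient.

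The main obstacle is the dictionary step: setting up the correspondence between necklace/conecklace entries and bounding paths, and verifying that the entrywise containment conditions encode exactly the path conditions appearing in \cite{Ben22}—no more, no less. Two technical points deserve care. First, one must handle the cyclic indexing and basepoints consistently so that the lex-minimum basis at each index really is governed by a single bounding path. Second, one must confirm that neither the necklace alone nor the conecklace alone suffices—each controls only one of the two bounding paths, which justifies why both hypotheses appear in the theorem statement and, in combination with Remark \ref{rmk:nonlocality}, highlights the contrast between LPMs and general positroids.
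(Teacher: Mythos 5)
Your forward direction is fine and is essentially the paper's (\Cref{prop:Ne CoNe Cont}): necklace containment comes from the Gale-minimal-basis property of quotients, and the conecklace half can indeed be obtained by dualizing. The problem is the backward direction, and specifically the ``dictionary'' on which your whole reduction rests. For an LPM $M[U,L]$ the necklace entry $I_i$ for $i\neq 1$ is \emph{not} determined by the upper bounding path alone: the $\leq_i$-minimal basis depends on both $U$ and $L$ because of the cyclic wrap-around. For instance, on $[4]$ the rank-one LPMs $M[2,3]$ (bases $\{2\},\{3\}$) and $M[2,4]$ (bases $\{2\},\{3\},\{4\}$) have the same upper set $U=\{2\}$ but different necklaces ($I_4=\{2\}$ versus $I_4=\{4\}$). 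Consequently, entrywise necklace containment is strictly stronger than ``$U_M\subseteq U_N$,'' and entrywise conecklace containment is strictly stronger than ``$L_M\subseteq L_N$''; your translation collapses the hypotheses to their $i=1$ instances and throws away exactly the information the theorem needs.

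This matters because the conjunction of the two path conditions you extract is \emph{not} the Benedetti--Knauer characterization: \Cref{thm:LPM-greedy-pairing} requires, beyond $U'\subseteq U$ and $L'\subseteq L$, the nontrivial pairing condition $j_s\leq i_s$ and $u_{i_s}-\ell_{j_s}\leq i_s-j_s$. If your reduction were valid it would prove the false statement that $U'\subseteq U$ and $L'\subseteq L$ already force a quotient. Concretely, take $M'=M[2,3]$ and $N=M[12,34]=U_{2,4}$: both first-entry containments hold, yet $M'$ is not a quotient of $N$ (the circuit $\{1,2,4\}$ of $U_{2,4}$ is not a union of circuits of $M'$), and indeed the full necklace containment fails at $i=4$ since $I'_4=\{2\}\not\subseteq\{4,1\}=I_4$ --- precisely the entry your dictionary discards. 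The paper's proof of \Cref{thm:LPM-formal} instead keeps the containments at \emph{all} indices (conecklace at every $i$, necklace only at $i=1$ suffices), converts them via \Cref{prop:neck-coneck-containment} into Grassmann-interval containments $S^{\sigma}_{\sigma(x)}\subseteq S^{\pi}_{\pi(x)}$, and then assumes the pairing condition of \Cref{thm:LPM-greedy-pairing} fails to derive a contradiction on values of the decorated permutations. To repair your argument you would have to show that the full entrywise containments imply the pairing condition --- that implication is the real content of the proof, and it is not a pointwise statement about two bounding paths.
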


\Cref{thm:LPM} demonstrates that certain ``local properties'' may naturally ``globalize'' for lattice path matroids but not for general positroids. In fact, the same counterexample for \cite[Conjecture 6.3]{Oh17}, as shown in \Cref{cex:Oh conj}, also shows that \Cref{thm:LPM} is not true for general positroids. 

We remark that a complete characterization of flag lattice path matroids was already obtained by \cite{Ben22}, and the purpose of our \Cref{thm:LPM} lies more in providing contrast to \Cref{thm:uniformCW}.  

Since CW-arrows are closely related to Grassmann necklaces, it is natural to consider counterparts of Grassmann necklace containment conditions in CW-arrows. Surprisingly, they turn out to be intimately related to the \emph{cyclic shift} operation introduced by \cite{Ben19} for decorated permutations (another canonical representation of positroids). Using this connection, we resolve the following conjecture in \cite{Ben19}.

\begin{theorem}[{\cite[Conjecture 37]{Ben19}}]\label{thm:cyclicshift}
If $M,N$ are positroids and $M$ is an elementary quotient of $N$, then the decorated permutations associated with $M$ and $N$ are related by a cyclic shift.
\end{theorem}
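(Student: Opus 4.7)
My plan is to prove Theorem~\ref{thm:cyclicshift} by using the Grassmann necklace and CW-arrow representations as intermediaries between matroid quotients and decorated permutations. Since decorated permutations and Grassmann necklaces biject canonically with positroids, it suffices to translate the cyclic shift operation on decorated permutations into a precise local modification of the Grassmann necklace, and then show that an elementary quotient induces exactly that modification. This matches the strategy hinted at in the paragraph immediately preceding the theorem, where the authors emphasize that CW-arrow counterparts of Grassmann necklace containment are ``intimately related'' to the cyclic shift.

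First I would establish a dictionary: compute the effect of a cyclic shift on the Grassmann necklace, identifying the unique CW-arrow that is shifted and the corresponding entrywise change in $(I_1,\dots,I_n)$. This should boil down to a direct calculation with the standard bijection between decorated permutations and Grassmann necklaces. Next I would invoke the well-known necessary condition (\cite[Remark 36]{Ben19}) that if $M$ is an elementary quotient of $N$, then $I_i(M)\subseteq I_i(N)$ with $|I_i(N)\setminus I_i(M)|\le 1$ for every $i\in[n]$. Writing $a_i := I_i(N)\setminus I_i(M)$ whenever nonempty, the remaining task is to show that the tuple $(a_1,\dots,a_n)$ is consistent with a single cyclic shift, after which translating back through the dictionary yields the theorem. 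The trivial case of a rank-preserving elementary quotient, where $M=N$, reduces to the identity cyclic shift.

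The main obstacle is precisely this consistency argument. A priori, different indices $i$ could witness the rank drop at unrelated elements, producing a much more complicated modification than a single cyclic shift. To force uniformity I would exploit the quotient axiom (every circuit of $N$ is a union of circuits of $M$) to track how the CW-arrows of $N$ transform under the quotient, aiming to show that the change is localized to exactly one CW-arrow whose endpoint shifts by one step. The rank-analytic techniques from \cite{mcalmon2020rank} used in the proof of \Cref{thm:uniformCW} should plug in directly here, since \Cref{thm:uniformCW} already converts quotient relations into global constraints on unions of CW-arrows, and an elementary quotient is the extremal case where these constraints become tight.
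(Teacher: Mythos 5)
There is a genuine gap, and it sits exactly where you locate ``the main obstacle.'' Your first two steps are fine and match the paper's route: an elementary quotient gives entrywise necklace containment $I_i(M)\subseteq I_i(N)$ with $|I_i(N)\setminus I_i(M)|=1$ (via \Cref{prop:quotient-implies-containment} and \Cref{prop:necklace-minimizes}), and the task is to show these single-element differences assemble into one cyclic shift. But your plan for that step rests on a false target claim: you aim to show ``the change is localized to exactly one CW-arrow whose endpoint shifts by one step,'' and already in your dictionary step you speak of ``the unique CW-arrow that is shifted.'' A cyclic shift $\overrightarrow{\rho_{A}}$ moves the numbers at \emph{all} positions outside $A$, so in general many CW-arrows change, each endpoint moving by a variable amount (e.g.\ $\pi^{:}=456123$ versus $\sigma^{:}=2461\overline{5}3$, where three arrows change); the correct invariant is not ``one arrow moves by one step'' but that the differences $S^{\pi}_i\setminus S^{\sigma}_i=(\pi^{-1}(i),\sigma^{-1}(i)]$ are cyclic intervals which, because the rank drops by exactly one, tile $[n]$ as a disjoint union, and such a tiling by intervals whose left endpoints are the preimages of the shifted values forces precisely the chaining $\sigma^{-1}(\pi(i_r))=i_{r+1}$ of a cyclic shift. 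That counting-and-chaining argument (the paper's \Cref{lemma:Gr contain and shift,lem:vec=shift}, packaged as \Cref{thm:neckcontain=shift}) is the actual mathematical content of the theorem, and your proposal does not supply it or any substitute for it.

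Two further miscalibrations in the plan: first, you propose to ``exploit the quotient axiom'' (circuits) to force uniformity, but no information beyond necklace containment and the rank condition is needed --- the equivalence ``containment $\Leftrightarrow$ cyclic shift'' holds for arbitrary decorated permutations with $\rk(\sigma^{:})=\rk(\pi^{:})-1$, quotient or not --- so appealing to circuits points the argument in an unhelpful direction. Second, the rank-analytic machinery of \cite{mcalmon2020rank} does not ``plug in directly'': it is used in the paper only for quotients of uniform matroids (\Cref{thm:uniformCW}), where one side of the pair is completely explicit, and it plays no role in the proof of \Cref{thm:shift}. As written, the proposal reduces the theorem to a consistency statement it does not prove and sketches a method whose intermediate goal is false, so it cannot be completed in its current form.
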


Note that since cyclic shift is a counterpart of the necklace containment condition (as shown in \Cref{thm:neckcontain=shift}), it is by our standards a ``local property,'' and thus does not provide a sufficient characterization of flag positroid (see \Cref{cex:Oh conj}).

This paper is structured as follows. \Cref{sec:prelim} provides necessary background on flag matroids and positroids. \Cref{sec:3} builds a connection between the containment of Grassmann necklaces and cyclic shifts, culminating in a proof of \Cref{thm:cyclicshift}. \Cref{thm:uniformCW,thm:LPM} are proved and discussed in further detail in \Cref{sec:4,sec:5}, respectively.

\section{Preliminaries}\label{sec:prelim}

Throughout this work, we denote the set $\{1,2,\dots,n\}$ by $[n]$, and denote the collection of $k$-element subsets of $[n]$ by $\binom{[n]}{k}$.

\subsection{Matroids and quotients of matroids}\label{subsec:quotient}

Matroids are combinatorial structures that abstract and generalize the concept of linear independence. We refer the readers to \cite{Ox11} for a more detailed exposition.

\begin{defin}
    Let $E$ be a finite set and $\mathcal{B}$ be a nonempty collection of subsets in $E$. The pair $M=(E, \mathcal{B})$ is a \emph{matroid} if for all $B, B' \in \mathcal{B}$ and $x \in B\setminus B'$, there exists $y \in B'\setminus B$ such that $(B \cup \left\{y\right\}) \setminus \left\{x\right\} \in \mathcal{B}$. The set $E$ is the \emph{ground set} of $M$, and the elements of $\mathcal{B}=\mathcal{B}(M)$ are the \emph{bases} of $M$.
\end{defin}

\begin{defin}
    It can be shown that all bases of a matroid have the same cardinality. We call this cardinality the \emph{rank} of $M$, denoted by $\rk(M)$. Moreover, we associate with every matroid $M=(E,\mathcal{B})$ a \emph{rank function} $\rk_{M}:2^{[n]}\rightarrow\mathbb{N}$ defined by $\rk_{M}(S):=\max\{|S\cap B|:B\in\mathcal{B}\}$. 
\end{defin}

\begin{prop}[{\cite[Lemma 1.3.1]{Ox11}}]\label{prop:submodular}
For any matroid $M=(E,\mathcal{B})$, its rank function is \emph{submodular}, which means that if $A$ and $B$ are subsets of $E$ then $\rk_{M}(A)+\rk_{M}(B)\geq \rk_{M}(A\cap B)+\rk_{M}(A\cup B)$.
\end{prop}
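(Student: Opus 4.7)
The plan is to prove submodularity by an ``extend and intersect'' argument. I would fix an independent set $I \subseteq A \cap B$ of maximum size in $A\cap B$, so that $|I| = \rk_M(A\cap B)$, and then extend $I$ to a maximum independent set $J$ of $A\cup B$, so that $I \subseteq J$ and $|J| = \rk_M(A\cup B)$. Such an extension exists by a standard consequence of the basis exchange axiom: any independent set of a matroid can be augmented, one element at a time, to a maximum independent set of any larger ground set containing it. I would state and use this extension property as a preliminary lemma (or cite it from Oxley \cite{Ox11}), since it is the only nontrivial matroid-theoretic ingredient.

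Next I would extract the inequality from a simple inclusion-exclusion count on $J$. Since $J\cap A$ is independent in $A$ and $J\cap B$ is independent in $B$, one has
\[
|J\cap A| \le \rk_M(A), \qquad |J\cap B| \le \rk_M(B).
\]
On the other hand, since $J \subseteq A\cup B$,
\[
|J\cap A| + |J\cap B| = |J\cap(A\cup B)| + |J\cap A\cap B| = |J| + |J\cap A\cap B|.
\]
So it remains to identify $|J\cap A\cap B|$ with $\rk_M(A\cap B)$. By construction $I \subseteq J \cap A \cap B$, giving $|J\cap A\cap B| \ge |I| = \rk_M(A\cap B)$; conversely $J\cap A\cap B$ is an independent subset of $A\cap B$, so $|J\cap A\cap B| \le \rk_M(A\cap B)$. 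Hence $|J\cap A\cap B| = \rk_M(A\cap B)$, and combining the three displays yields
\[
\rk_M(A) + \rk_M(B) \ge |J\cap A| + |J\cap B| = \rk_M(A\cup B) + \rk_M(A\cap B),
\]
which is the submodularity inequality.

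The main (and really only) obstacle is the extension step: justifying that the chosen $I$ can be completed to a maximum independent set of $A\cup B$ while remaining contained in $J$. This is essentially the ``augmentation'' property of matroids, and it is a short consequence of the exchange axiom: if $I$ is independent and $K$ is any maximum independent subset of $A\cup B$ with $|I| < |K|$, then repeatedly applying the exchange axiom allows us to swap in elements of $K\setminus I$ until $I$ has been enlarged to a maximum independent subset of $A\cup B$. Once this is in hand, the argument above is purely set-theoretic counting, so I would keep the write-up brief and reference Oxley for the extension step rather than reproducing its proof.
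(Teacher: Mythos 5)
Your proof is correct: the ``extend a maximum independent set of $A\cap B$ to one of $A\cup B$, then count'' argument is exactly the standard proof, and indeed essentially the one in Oxley \cite[Lemma 1.3.1]{Ox11}, which the paper simply cites rather than reproves. The only ingredient beyond counting is the augmentation property, which you correctly identify and may cite; nothing further is needed.
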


\begin{defin}
Given a matroid $M = (E, \mathcal{B})$, a subset $I \subseteq E$ is called an \emph{independent set} of $M$ if it is contained in some basis of $M$. Otherwise, we say that $I$ is \emph{dependent}. If a subset $C\subseteq E$ is dependent, but every proper subset of $C$ is independent, then $C$ is said to be a \emph{circuit} of $M$.
\end{defin}

\begin{defin}
    Given a matroid $M = (E, \mathcal{B})$, the collection $\mathcal{B}^* = \left\{ E \setminus B \mid B \in \mathcal{B}\right\}$ also forms the set of bases of a matroid. The matroid $(E,\mathcal{B}^*)$, denoted by $M^*$, is called the \emph{dual} of $M$.
\end{defin}

The rank functions of a matroid and its dual are related by the following formula.

\begin{prop}[{\cite[Proposition 2.1.9]{Ox11}}]\label{prop:rank-dual}
Let $\rk_{M}$ be the rank function of $M$ on the ground set $E$, and $\rk_{M^*}$ be the rank function of $M^{*}$. Then for any $S\subseteq E$ we have
    \[\rk_{M^*}(S) = \rk_{M}(E \setminus S) + |S| - \rk(M).\]
\end{prop}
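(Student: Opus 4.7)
The plan is to compute $\rk_{M^{*}}(S)$ directly from the definition, using the fact that the bases of $M^{*}$ are precisely the complements of bases of $M$. First I would write
\[
\rk_{M^{*}}(S) \;=\; \max_{B \in \mathcal{B}(M)} |S \cap (E \setminus B)| \;=\; \max_{B \in \mathcal{B}(M)} \bigl(|S| - |S \cap B|\bigr) \;=\; |S| - \min_{B \in \mathcal{B}(M)} |S \cap B|,
\]
so the task reduces to showing that
\[
\min_{B \in \mathcal{B}(M)} |S \cap B| \;=\; \rk(M) - \rk_{M}(E \setminus S).
\]

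For the inequality $\min |S \cap B| \geq \rk(M) - \rk_{M}(E \setminus S)$, I would use the trivial decomposition $|B| = |S \cap B| + |B \cap (E \setminus S)|$; since $B \cap (E \setminus S)$ is independent and contained in $E \setminus S$, its size is at most $\rk_{M}(E \setminus S)$, and rearranging gives the bound. For the reverse inequality, I need to produce a basis $B_{0}$ that actually achieves equality. Here I would start with a maximum independent subset $I_{0} \subseteq E \setminus S$ of size $\rk_{M}(E \setminus S)$, then invoke the standard fact (a consequence of the basis exchange axiom) that any independent set extends to a basis; let $B_{0} \supseteq I_{0}$ be such a basis. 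Then $|B_{0} \cap (E \setminus S)| \geq |I_{0}| = \rk_{M}(E \setminus S)$, so $|S \cap B_{0}| \leq \rk(M) - \rk_{M}(E \setminus S)$, completing the argument.

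The only nontrivial ingredient is the extension lemma (every independent set in a matroid is contained in some basis), and that is the step I would expect to be the principal obstacle if one were rebuilding matroid theory from scratch; however, it is a standard consequence of the basis exchange axiom stated in the definition and is already implicit in the preliminaries. Combining the two inequalities gives $\min_{B} |S \cap B| = \rk(M) - \rk_{M}(E \setminus S)$, and substituting back yields
\[
\rk_{M^{*}}(S) \;=\; |S| - \rk(M) + \rk_{M}(E \setminus S),
\]
as desired.
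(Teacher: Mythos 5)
Your proof is correct. The paper gives no proof of this proposition---it is quoted directly from Oxley---and your argument is essentially the standard one found there: reduce $\rk_{M^*}(S)$ to $|S|-\min_{B\in\mathcal{B}(M)}|S\cap B|$ via complementation of bases, then identify the minimum with $\rk(M)-\rk_M(E\setminus S)$. One small remark: with the paper's definition $\rk_M(A)=\max_{B\in\mathcal{B}}|A\cap B|$, you do not even need the extension lemma for the achievability step, since you may simply take $B_0$ to be a basis attaining the maximum of $|B\cap(E\setminus S)|$, which gives $|S\cap B_0|=\rk(M)-\rk_M(E\setminus S)$ directly.
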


We now define the quotient relation on matroids, the central object of study in this paper.

\begin{defin}
Given two matroids $M$ and $N$ on the same ground set $E$, we say that $M$ is a \emph{quotient} of $N$, or $(M,N)$ forms a \emph{flag matroid}, if every circuit of $N$ is the union of a collection of circuits of $M$. If in addition the rank of $M$ is exactly one less than the rank of $N$, then $M$ is called an \emph{elementary quotient} of $N$.
\end{defin}

The quotient relation has a convenient characterization using rank functions.

\begin{prop}[{\cite[Proposition 8.1.6]{white1986theory}}]\label{prop:rank-quotient}
Given two matroids $M$, $M'$ on the same ground set $E$, $M$ is a quotient of $M'$ if and only if for all pairs of subsets $A,B$ of $E$ with $A\subseteq B$, 
$$\rk_{M}(B)-\rk_{M}(A)\leq \rk_{M'}(B)-\rk_{M'}(A).$$
\end{prop}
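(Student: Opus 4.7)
The plan is to establish the equivalence by first reducing the rank-increment inequality to the single-element case, and then translating both sides into a statement about the closure operators of $M$ and $M'$. For the reduction, note that along any chain $A = S_0 \subsetneq S_1 \subsetneq \cdots \subsetneq S_\ell = B$ obtained by adding the elements of $B \setminus A$ one at a time, both $\rk_M(B) - \rk_M(A)$ and $\rk_{M'}(B) - \rk_{M'}(A)$ telescope into sums of single-element rank jumps. Since matroid rank functions satisfy $\rk(A \cup \{e\}) - \rk(A) \in \{0,1\}$ (a consequence of monotonicity together with the submodularity in \Cref{prop:submodular}), the inequality for all $A \subseteq B$ is equivalent to the one-step version, which in turn is equivalent to the closure inclusion $\cl_{M'}(A) \subseteq \cl_M(A)$ for every $A \subseteq E$.

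For the forward direction, suppose every circuit of $M'$ is a union of circuits of $M$, and let $e \in \cl_{M'}(A) \setminus A$. The standard circuit characterization of closure provides a circuit $C$ of $M'$ with $e \in C \subseteq A \cup \{e\}$. Decomposing $C$ as a union of circuits of $M$, some member $C_i$ of the decomposition must contain $e$; since $C_i \subseteq C$, we get $C_i \setminus \{e\} \subseteq A$, and therefore $e \in \cl_M(C_i \setminus \{e\}) \subseteq \cl_M(A)$. Conversely, assume the rank-increment inequality. Let $C$ be a circuit of $M'$ and fix any $e \in C$. Since $C \setminus \{e\}$ is independent in $M'$ while $C$ is not, we have $\rk_{M'}(C) = \rk_{M'}(C \setminus \{e\})$. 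Applying the inequality with $A = C \setminus \{e\}$ and $B = C$ forces $\rk_M(C) = \rk_M(C \setminus \{e\})$, so $e \in \cl_M(C \setminus \{e\})$ and hence $C$ contains a circuit of $M$ passing through $e$. Varying $e$ over all of $C$ exhibits $C$ as a union of circuits of $M$.

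The main subtlety I expect lies in the forward direction: one must extract from the decomposition $C = \bigcup_i C_i$ not merely some circuit of $M$ inside $C$, but a circuit passing through the specific element $e$ witnessing the failure of the closure inclusion. This is handled by the trivial but essential observation that $e \in C$ must belong to at least one $C_i$. The reduction to the single-element inequality, though elementary, is the other indispensable ingredient, since the full rank-increment condition is not easy to compare with the circuit condition directly; once reduced, both implications become short exercises in the interplay between circuits and closure.
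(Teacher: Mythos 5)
Your argument is correct, but note that the paper does not prove this proposition at all: it is quoted verbatim from White's \emph{Theory of Matroids} (Proposition 8.1.6) and used as a black box, so there is no internal proof to compare against. On its own merits, your proposal is a complete and standard textbook-style derivation: the telescoping reduction of the rank-increment condition to the single-element case is valid because matroid rank is monotone with unit increments (immediate from the paper's definition $\rk_{M}(S)=\max\{|S\cap B|\}$), and the one-step inequality is indeed equivalent to the closure containment $\cl_{M'}(A)\subseteq\cl_{M}(A)$ since both increments lie in $\{0,1\}$. The forward direction correctly uses the circuit characterization of closure and the observation that some circuit in the decomposition must pass through the witnessing element $e$ (with the trivial case $e\in A$ implicitly fine), and the reverse direction correctly applies the inequality to the pair $(C\setminus\{e\},C)$ to produce, for each $e\in C$, a circuit of $M$ through $e$ inside $C$, exhibiting $C$ as a union of circuits of $M$. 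The only cosmetic caveat is that the closure operator and its circuit characterization are not defined in this paper, so if this proof were to be inserted, those standard facts would need a citation to \cite{Ox11} or a line of justification; mathematically there is no gap.
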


\subsection{Gale orders}

Positroids are a special class of matroids linked with the cyclic structure $(1,2,\dots,n,1)$ of the ground set $[n]$. We first introduce some useful notions for any ordering of the ground set.

\begin{defin}
For any total order $<_{w}$ on $[n]$, the \emph{Gale order} $\leq_{G,w}$ induced by $<_{w}$ is a partial order on subsets of $[n]$: for two $k$-element subsets $A,B\in\binom{[n]}{k}$, we say $A\leq_{G,w}B$ if $a_{i}\leq_{w}b_{i}$ for all $i\in[k]$, where $a_{i}$ (resp. $b_{i}$) is the $i$-th smallest element of $A$ (resp. $B$) under the order $<_{w}$.    
\end{defin}

The special orderings we use are the cyclic orders $<_{i}$ defined by
$$i<_{i}i+1<_{i}\dots<_{i}n<_{i}1<_{i}2<_{i}\dots <_{i}i-1.$$
With a slight abuse of notation, we use $\leq_{i}$ to also denote the Gale order induced by $<_{i}$.

The following proposition provides another characterization of matroid quotients.

\begin{prop}[{\cite[Theorems 1.3.1 and 1.7.1]{Bor03}}]\label{prop:quotient-implies-containment}
If $\mathcal{B}$ is the collection of bases of a matroid on a ground set ordered by $<_{w}$, then there is a unique basis $A\in\mathcal{B}$ such that $A\leq_{G,w}A'$ for any $A'\in\mathcal{B}$. Moreover, if $\mathcal{B}$ and $\mathcal{B}'$ are the collections of bases of two matroids $M$ and $M'$ respectively, and $M$ is a quotient of $M'$, then their unique minimal bases satisfy the containment relation
$$\min_{\leq_{G,w}}\mathcal{B}\subseteq \min_{\leq_{G,w}}\mathcal{B}'.$$
\end{prop}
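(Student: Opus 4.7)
My plan is to identify the unique Gale-minimum basis with the output of the matroid greedy algorithm under the order $<_{w}$, and then to use \Cref{prop:rank-quotient} to compare the two greedy outputs.

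First, I would introduce the \emph{greedy basis} $A_{w}$: list the ground set as $e_{1}<_{w}e_{2}<_{w}\cdots<_{w}e_{n}$, initialize $A_{w}=\emptyset$, and for each $i$ in turn add $e_{i}$ to $A_{w}$ if and only if $A_{w}\cup\{e_{i}\}$ remains independent in $M$. Standard matroid theory ensures $A_{w}$ is a basis. More importantly, for every prefix $T_{i}=\{e_{1},\ldots,e_{i}\}$, the set $A_{w}\cap T_{i}$ is a basis of $T_{i}$, so $|A_{w}\cap T_{i}|=\rk_{M}(T_{i})$, and the element $e_{i}$ belongs to $A_{w}$ exactly when $\rk_{M}(T_{i})>\rk_{M}(T_{i-1})$.

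Next, I would prove that $A_{w}\leq_{G,w}B$ for every basis $B$. Writing $A_{w}=(a_{1}<_{w}\cdots<_{w}a_{k})$ and $B=(b_{1}<_{w}\cdots<_{w}b_{k})$, suppose toward contradiction that $j$ is the smallest index with $a_{j}>_{w}b_{j}$. Let $T$ be the set of elements of $[n]$ that are $\leq_{w}b_{j}$. Then $\{b_{1},\ldots,b_{j}\}\subseteq T$ is independent, so $\rk_{M}(T)\geq j$. However, minimality of $j$ forces $a_{1},\ldots,a_{j-1}\in T$, while $a_{j},\ldots,a_{k}>_{w}b_{j}$, so $|A_{w}\cap T|=j-1$. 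This contradicts $|A_{w}\cap T|=\rk_{M}(T)$. Uniqueness of the Gale-minimum follows immediately, since two minima $A,A'$ satisfy $A\leq_{G,w}A'$ and $A'\leq_{G,w}A$, forcing componentwise equality of the sorted lists.

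Finally, for the containment statement, let $A_{w}$ and $A'_{w}$ be the greedy bases of $M$ and $M'$ respectively. By the characterization above, $e_{i}\in A_{w}$ iff $\rk_{M}(T_{i})-\rk_{M}(T_{i-1})=1$, and similarly for $M'$. Since a matroid rank function changes by at most $1$ upon adjoining one element, \Cref{prop:rank-quotient} applied to the pair $T_{i-1}\subseteq T_{i}$ gives $\rk_{M}(T_{i})-\rk_{M}(T_{i-1})\leq\rk_{M'}(T_{i})-\rk_{M'}(T_{i-1})$, so the left-hand difference being $1$ forces the right-hand one to be $1$ as well. Thus $e_{i}\in A_{w}\Rightarrow e_{i}\in A'_{w}$ for every $i$, yielding $\min_{\leq_{G,w}}\mathcal{B}=A_{w}\subseteq A'_{w}=\min_{\leq_{G,w}}\mathcal{B}'$. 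The main obstacle is the first half, namely the prefix-rank comparison that pins down the greedy output as the Gale minimum; once that identification is in place, the quotient half follows almost mechanically from the single-element rank-increment bound.
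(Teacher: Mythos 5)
Your proof is correct. Note that the paper does not prove this proposition at all: it is imported verbatim from Borovik--Gelfand--White \cite{Bor03} (their Theorems 1.3.1 and 1.7.1), so there is no in-paper argument to compare against. Your route is a clean self-contained substitute: you identify the Gale-minimum with the greedy basis via the prefix-rank identity $|A_{w}\cap T_{i}|=\rk_{M}(T_{i})$, derive minimality and uniqueness from it by the standard exchange-free counting argument, and then obtain the containment from \Cref{prop:rank-quotient} applied to consecutive prefixes $T_{i-1}\subseteq T_{i}$, using that rank increments are $0$ or $1$. All steps check out: the prefix-rank identity does hold for the greedy output (every rejected element lies in the closure of the previously chosen ones), the contradiction $|A_{w}\cap T|=j-1<j\leq\rk_{M}(T)$ at the first index where $a_{j}>_{w}b_{j}$ is valid because $T$ is itself a prefix, and the direction of the inequality in \Cref{prop:rank-quotient} is used correctly ($M$ the quotient on the left). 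By contrast, the source \cite{Bor03} develops this through the general machinery of Coxeter matroids and flag matroids (maximality property and the concordance of quotients), so your argument is more elementary and uses only tools already stated in the paper, at the cost of being specific to the Gale order on ordinary matroids rather than the more general setting treated there.
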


\subsection{Grassmann necklaces and positroids}

The following combinatorial structure helps us define positroids.
\begin{defin}[\cite{Pos06}, Definition 16.1]
For $0\leq k\leq n$, a \emph{Grassmann necklace of type $(k,n)$}, or simply a \emph{necklace}, is a sequence $I=(I_{1},I_{2},\dots,I_{n})$ of subsets $I_{i}\in\binom{[n]}{k}$ such that for every $i\in [n]$,
\begin{enumerate}
\item if $i\in I_{i}$, then $I_{i+1}=(I_{i}\setminus\{i\})\cup \{j\}$ for some $j\in[n]$,
\item if $i\not\in I_{i}$, then $I_{i+1}=I_{i}$,
\end{enumerate}
where $I_{n+1}:=I_{1}$.
\end{defin}

The original definition of positroids \cite{Pos06} is matroids whose bases correspond to nonzero maximal minors of a $k\times n$ matrix with all maximal minors nonnegative. In this paper, we adopt the following equivalent definition proved by \cite{Oh11}.

\begin{defin}[{\cite[Theorem 6]{Oh11}}]\label{prop:necklace-minimizes}
For any Grassmann necklace $I=(I_{1},\dots,I_{n})$ of type $(k,n)$, the set
$$\mathcal{B}(I)=\left\{B\in\binom{[n]}{k}:I_{i}\leq_{i} B\text{ for all }i\in [n]\right\}$$
forms the collection of bases of some matroid. Such a matroid is said to be a \emph{positroid} on the ground set $[n]$. In addition, we always have $I_{i}=\min_{\leq_{i}}\left\{\mathcal{B}(I)\right\}$ for all $i\in[n]$.
\end{defin}

We also introduce the following dual version of Grassmann necklaces, which coincides with the ``upper Grassmann necklace'' defined in \cite{Oh11}.

\begin{defin}\label{def:conecklace}
    For a positroid $M=([n],\mathcal{B})$, the sequence $(J_{1},\dots,J_{n})$ with $J_{i}:=\max_{\leq i}\{\mathcal{B}(I)\}$ for $i\in[n]$ is called the \emph{Grassmann conecklace}, or simply the \emph{conecklace}, of $M$.
\end{defin}

    As a general convention, in the examples throughout this paper, a set of Arabic numerals is denoted by their concatenation.
    
\begin{eg}\label{eg:grNeck}
    Consider the positroid $P = ([5], \left\{ 1234,1235,1245,1345\right\})$. Its associated Grassmann necklace is $I = (1234, 2341, 3451, 4512, 5123)$, and its Grassmann conecklace is $J=(1345,3451,\allowbreak 4512,5123,1234)$.
\end{eg}

A special class of positroids we study in \Cref{thm:LPM} is \emph{lattice path matroids}, defined as follows. It is proved in \cite{Oh11} that lattice path matroids are positroids.
\begin{defin}[{\cite[Definition 3.1]{Bon03}}]\label{def:LPM}
    A \emph{lattice path matroid (LPM)} $M[U,L]$, where $U,L \in \binom{[n]}{k}$ and $U \leq_{1} L$, is a matroid $([n],\mathcal{B})$ with $\mathcal{B} := \left\{B \in \binom{[n]}{k} \mid U \leq_{1} B \leq_{1} L\right\}$.
\end{defin}

\subsection{Decorated permutations}

The paper \cite{Pos06} also introduces decorated permutations, a class of combinatorial objects that are in bijection with Grassmann necklaces.

\begin{defin}[{\cite[Definition 13.3]{Pos06}}]
    A \emph{decorated permutation} $\pi^:$ on $[n]$ consists of data $(\pi,\col)$, where $\pi \in \mathfrak{S}_{n}$ and $\col:[n] \to \{0,\pm 1\}$ is a mapping such that $\col^{-1}(0)$ is the set of unfixed points of $\pi$. %We denote the set of decorated permutations on $[n]$ by $D_{n}$.
\end{defin}

Following the convention of \cite{Pos06}, we add underlines to elements in $\col^{-1}(-1)$, overlines to $\col^{-1}(1)$, and nothing to others. We say elements in $\col^{-1}(1)$ and $\col^{-1}(-1)$ are \emph{loops} and \emph{coloops} respectively.

As a general convention in examples throughout this paper, a decorated permutation $\pi^{:}$ is represented by concatenating the numbers $\pi^{:}(1),\dots,\pi^{:}(n)$, with each number either underlined, overlined, or undecorated. 

\begin{eg}\label{eg:dp}
    Given decorated permutation $\pi^:=(\pi,\col)=41\overline{3}562\underline{7}$, we have $\col^{-1}(0)=4156$, $\col^{-1}(1) = 3$, and $\col^{-1}(-1) = 7$. Thus, its loop is $3$ and coloop is $7$.
\end{eg}

To state the bijection between Grassmann necklaces and decorated permutations on the same ground set from \cite[Lemma 16.2]{Pos06}, we introduce the concept of \emph{anti-exceedance}.
% By \cite[Lemma 16.2]{Pos06}, the author shows that there is a bijection between Grassmann necklaces and decorated permutations on the same ground set. In order to describe the bijection explicitly, the author introduces the concept of \emph{anti-exceedence}.

\begin{defin}[\cite{Pos06}]
    Given a decorated permutation $\pi^:$, the set of its \emph{$i$-anti-exceedances} is
    \[W_{i}(\pi^:) := \left\{j \in [n] \mid j <_{i} \pi^{-1}(j) \text{ or } \col(j) = -1\right\}.\]
\end{defin}

% A fundamental fact regarding anti-exceedence is shown as follows.

\begin{prop}[{\cite[Lemma 16.2]{Pos06}}]\label{prop:decpermtoneck}
    Given a decorated permutation $\pi^{:}$ on $[n]$, the sequence $(W_{1}(\pi^{:}),\dots,W_{n}(\pi^{:}))$ is a Grassmann necklace. Conversely, the following procedure maps a Grassmann necklace $I=(I_{1},\dots,I_{n})$ to a decorated permutation $\sigma^: = (\sigma, \col)$.
    \begin{enumerate}
        \item If $I_{i+1} = (I_{i} \setminus \{i\}) \cup \{j\}$ with $i \neq j$, let $\sigma(i) = j$ and $\col(i) = 0$.
        \item If $I_{i+1} = I_i$, let $\sigma(i) = i$; moreover, if $i \in I_i$, let $\col(i) = -1$, otherwise $\col(i) = 1$.
    \end{enumerate}

    These two maps between Grassmann necklaces and decorated permutations are mutually inverse.
\end{prop}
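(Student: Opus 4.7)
The proposition bundles two claims: for every decorated permutation $\pi^{:}$ the sequence $W(\pi^{:}) = (W_1(\pi^{:}),\dots,W_n(\pi^{:}))$ is a Grassmann necklace, and the stated reverse procedure is its two-sided inverse. The whole plan hinges on one observation: the cyclic orders $<_i$ and $<_{i+1}$ agree on every pair not involving $i$, with the only difference being that $i$ moves from being the $<_i$-minimum to being the $<_{i+1}$-maximum.

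For the first claim, I would run a case analysis on the type of $i$ in $\pi^{:}$. An element $\ell \in [n]$ changes membership between $W_i(\pi^{:})$ and $W_{i+1}(\pi^{:})$ only if the comparison $\ell <_{?} \pi^{-1}(\ell)$ flips, which requires $\ell = i$ or $\pi^{-1}(\ell) = i$. If $i$ is fixed by $\pi$ (loop or coloop), only $\ell = i$ is relevant, and its membership is determined by $\col(i)$ alone; thus $W_{i+1}(\pi^{:}) = W_i(\pi^{:})$, matching necklace condition (1) with $j = i$ for coloops and condition (2) for loops. If $i$ is not fixed, then $i \in W_i(\pi^{:}) \setminus W_{i+1}(\pi^{:})$ (its predecessor $\pi^{-1}(i) \neq i$ flips from being $<_i$-greater to $<_{i+1}$-less than $i$), while $\pi(i) \in W_{i+1}(\pi^{:}) \setminus W_i(\pi^{:})$ (by the symmetric flip applied to $\pi^{-1}(\pi(i)) = i$); no other element switches status. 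This yields $W_{i+1}(\pi^{:}) = (W_i(\pi^{:}) \setminus \{i\}) \cup \{\pi(i)\}$ with $\pi(i) \neq i$, i.e., condition (1). Constancy of $|W_i(\pi^{:})|$ follows.

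For the inverse property, the composition $\pi^{:} \mapsto W(\pi^{:}) \mapsto \sigma^{:}$ recovers $\pi^{:}$ immediately, since the procedure simply reads off $\pi(i)$ and $\col(i)$ from the transition types identified above. For the reverse composition, the nontrivial step is verifying that the procedure actually yields a decorated permutation, which reduces to showing $\sigma$ is a bijection. I would prove this by a global cyclic balance: for each fixed $m \in [n]$, the indicator $\mathbbm{1}[m \in I_i]$ increases (as $i$ advances) only when $\sigma(i) = m$ with $i \neq m$, and decreases only when $i = m$ with $\sigma(m) \neq m$. Because $I$ is cyclic, the total number of increases equals the total number of decreases, which forces $|\sigma^{-1}(m)| = 1$ in both the $\sigma(m) = m$ and $\sigma(m) \neq m$ cases.

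Once bijectivity of $\sigma$ is in hand, applying the first claim to $\sigma^{:}$ shows $W(\sigma^{:})$ undergoes exactly the same step-by-step transitions as $I$, so the two sequences coincide provided they agree at a single index; this match is direct via the cumulative additions/removals just tabulated. I expect the main obstacle to be precisely the bijectivity argument, since it cannot be resolved by any single-step local inspection and genuinely requires reasoning along the full cycle of $[n]$.
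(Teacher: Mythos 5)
This proposition is not proved in the paper at all---it is imported verbatim from \cite[Lemma 16.2]{Pos06}---so there is no in-paper argument to compare against; judged on its own merits, your proposal is a correct, self-contained proof along the standard lines. The flip analysis of $<_i$ versus $<_{i+1}$ correctly isolates $\ell=i$ and $\ell=\pi(i)$ as the only elements whose anti-exceedance status can change, which yields the necklace axioms (and constancy of $|W_i(\pi^{:})|$), and your cyclic entry/exit count is sound: by the necklace axioms the only element that can leave $I_i$ at step $i$ is $i$ itself, and any element entering must be $\sigma(i)$ with $\sigma(i)\neq i$, so over one full cycle the balance of increases and decreases forces $|\sigma^{-1}(m)|=1$ for every $m$, i.e.\ $\sigma$ is a bijection (and $\col^{-1}(0)$ is the set of unfixed points by construction). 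The only place I would tighten is the closing identification $W(\sigma^{:})=I$: rather than saying the sequences ``agree at a single index,'' argue element-by-element that for each $m$ with $\sigma(m)\neq m$, both sequences contain $m$ precisely at the indices $i$ in the cyclic interval $(\sigma^{-1}(m),m]$ (for $I$ this follows from the tabulated entry step $\sigma^{-1}(m)$ and exit step $m$; for $W(\sigma^{:})$ it is the definition of anti-exceedance), while for fixed points of $\sigma$ the value $\col(m)=\pm 1$ determines membership at every index simultaneously on both sides. This is exactly what your ``cumulative additions/removals'' remark gestures at, so it is a matter of making the step explicit rather than a gap.
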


\begin{eg}
    Consider $\pi^: = \underline{1}5234$. Denote its corresponding necklace by $I$. By calculating anti-exceedances, we obtain the same Grassmann necklace as in \Cref{eg:grNeck}. Applying the procedure in \Cref{prop:decpermtoneck} to this necklace gives rise to $\underline{1}5234$, the decorated permutation we started with.
\end{eg}

Decorated permutations also give a canonical bijection between the Grassmann necklace and the Grassmann conecklace of a positroid.

\begin{prop}[{\cite[Lemma 17]{Oh11}}]\label{prop:coneck deco}
Let $M$ be a positroid over $[n]$. Let $I=(I_{1},\dots,I_{n})$ be its Grassmann necklace and let $\pi^{:}$ be its decorated permutation. Then the Grassmann conecklace $J=(J_{1},\dots,J_{n})$ of $M$ is given by $J_{i}=\pi^{-1}(I_{i})$.
\end{prop}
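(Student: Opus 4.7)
The plan is to pass to the dual matroid. As a first step, I would verify that complementation $S \mapsto [n]\setminus S$ is an order-reversing involution on $\binom{[n]}{k}$ under every Gale order $\leq_i$; this follows from the equivalent ``prefix-counting'' reformulation of Gale order, namely $A \leq_i B$ iff $A$ contains at least as many elements as $B$ in every cyclic interval starting at $i$, a condition manifestly symmetric under complementation. Combined with the fact that the bases of $M^*$ are the complements of the bases of $M$, this yields
\[
J_i(M) \;=\; \max_{\leq_i}\mathcal{B}(M) \;=\; [n]\setminus \min_{\leq_i}\mathcal{B}(M^*) \;=\; [n]\setminus I_i(M^*).
\]

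The main technical step is to identify the decorated permutation of $M^*$ as $(\pi^{-1},-\col)$, where $-\col$ swaps the roles of loops and coloops. By \Cref{prop:decpermtoneck} this reduces to checking that each anti-exceedance set $W_i((\pi^{-1},-\col))$ coincides with $I_i(M^*)$, i.e., is minimal among the bases of $M^*$ under $\leq_i$. One can verify this by translating the minimality of $I_i(M)$ among bases of $M$ (given by \Cref{prop:necklace-minimizes}) through \Cref{prop:rank-dual} and the order-reversing property of complementation from the previous paragraph. This verification is where the real work lies.

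With the dual's decorated permutation in hand, a direct unpacking of definitions gives
\[
W_i((\pi^{-1},-\col)) \;=\; \{\,j : j<_i \pi(j)\,\}\cup\{\text{loops of }M\},
\]
whose complement in $[n]$ equals $\{\,k : \pi(k)<_i k\,\}\cup\{\text{coloops of }M\}$. Since the coloops of $M$ are exactly the fixed points of $\pi$ with $\col=-1$, this last set rewrites as $\pi^{-1}\!\bigl(\{\ell : \ell<_i \pi^{-1}(\ell)\}\cup\{\text{coloops of }M\}\bigr) = \pi^{-1}(W_i(\pi^{:})) = \pi^{-1}(I_i)$, which combined with the first step gives $J_i = \pi^{-1}(I_i)$.

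The main obstacle is the middle step: establishing the behavior of decorated permutations under matroid duality. A direct alternative avoiding duality would be to show that $\pi^{-1}(I_i)$ is simultaneously a basis of $M$ and $\leq_i$-maximal, but the first of these looks difficult to verify without either invoking duality or passing through the geometric description of positroids via totally nonnegative matrices, which is why the duality route appears more natural.
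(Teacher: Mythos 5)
Your bookkeeping steps are correct: complementation reverses every Gale order $\leq_i$ (the prefix-counting argument works, though it maps $\binom{[n]}{k}$ to $\binom{[n]}{n-k}$ rather than being an involution on $\binom{[n]}{k}$), hence $J_i(M)=[n]\setminus I_i(M^*)$; and your unpacking of $W_i((\pi^{-1},-\col))$ and of its complement as $\pi^{-1}(I_i)$ is accurate under the paper's conventions ($\col^{-1}(1)$ the loops, $\col^{-1}(-1)$ the coloops). The genuine gap is the middle step, that the decorated permutation of $M^*$ is $(\pi^{-1},-\col)$, and the route you sketch for it cannot supply it: after your own reductions this step is exactly equivalent to the proposition being proved, since $W_i((\pi^{-1},-\col))=I_i(M^*)$ holds iff its complement $\pi^{-1}(I_i)$ equals $[n]\setminus I_i(M^*)=\max_{\leq_i}\mathcal{B}(M)=J_i$. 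Moreover, ``translating the minimality of $I_i(M)$ through complementation'' only yields $[n]\setminus I_i(M)=\max_{\leq_i}\mathcal{B}(M^*)$, i.e.\ it computes the \emph{conecklace} of $M^*$, whereas what you need is the \emph{necklace} of $M^*$, which is the complement of the unknown $J_i(M)$ --- so this argument is circular, and \Cref{prop:rank-dual} gives no direct handle on Gale-minimal bases that would break the circle.

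To complete the proof you would have to actually establish the duality statement for decorated permutations, for instance by showing directly that $\pi^{-1}(I_i)$ is a basis of $M$ and that every basis $B$ satisfies $B\leq_i\pi^{-1}(I_i)$, using the characterization $\mathcal{B}(M)=\{B: I_j\leq_j B\text{ for all }j\in[n]\}$ from \Cref{prop:necklace-minimizes}, or alternatively by passing through the totally nonnegative matrix realization and orthogonal complements. That argument is precisely the content of the cited result: the paper does not reprove this proposition but quotes it as \cite[Lemma 17]{Oh11}. As it stands, your proposal reduces the statement to an equivalent reformulation (via correct but peripheral computations) and defers the real work to a step whose suggested justification does not go through.
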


\begin{remark}\label{remark:different dec convention}
    In some other literature, for example, in \cite{Ard13} and \cite{Ben19}, they use a slightly different convention for decorated permutations compared to \Cref{prop:decpermtoneck}. Specifically, their decorated permutation $ (\pi, \col) $ corresponds to our $ (\pi^{-1}, \col) $, meaning that their ``numbers" are our ``positions" and vice versa.
\end{remark}

\section{Cyclic shifts and necklace containment}\label{sec:3}

In \cite{Ben19}, the authors introduce the notion of \emph{cyclic shifts}, an operation on decorated permutations that cyclically shifts some elements. Their main conjecture, \cite[Conjecture 37]{Ben19} (that is, our \Cref{thm:cyclicshift}), claims that the decorated permutations of any elementary quotient of a positroid $M$ can be represented by a cyclic shift of the decorated permutation of $M$. The primary goal of this section is to prove this conjecture.

We first introduce the definition of cyclic shifts.

\begin{defin}[{\cite[Definition 22]{Ben19}}] \label{def:shift}
    Given a decorated permutation $\pi^{:} = (\pi,\col_{\pi})$ on $[n]$ and a subset $A \subseteq [n]$, we define a new decorated permutation $(\sigma,\col_{\sigma})=\overrightarrow{\rho_{A}}(\pi^{:})$ as follows.
    \begin{enumerate}
        \item For all $i\in A$, let $\sigma(i)=\pi(i)$ and $\col_{\sigma}(i)=\col_{\pi}(i)$.

        \item For $i \notin A$, let $\sigma(i)=\pi(j)$, where $j$ is the maximum element of $[n]\setminus A$ under $<_{i}$.

        \item For $i\notin A$, let $\col_{\sigma}(i)=1$ if $\sigma(i)=i$. Otherwise, define $\col_{\sigma}(i)=0$.
    \end{enumerate}
    We call $\overrightarrow{\rho_{A}}(\pi^{:})$ the \emph{cyclic shift} of $\pi^{:}$ with respect to the set $A$.
\end{defin}

Intuitively, $\overrightarrow{\rho_{A}}$ freezes all positions in $A$. Then, among the remaining positions, it cyclically shifts the numbers one place to the right and decorates new fixed points as loops.

\begin{eg}
    Let $\pi^{:} = \overline{1}65\overline{4}23\underline{7}$, then $\overrightarrow{\rho_{247}}(\pi^{:}) = 361\overline{45}2\underline{7}$.  
\end{eg}

\begin{remark}
    The convention in \cite{Ben19} is slightly different from ours. Specifically, the subset $A$ in our definition represents the fixed positions, while in \cite{Ben19} it represents fixed numbers. This is because we use a different convention of decorated permutations. See \Cref{remark:different dec convention}.
\end{remark}

We are now ready to state our \Cref{thm:cyclicshift} more formally. In addition to the claim that the decorated permutations of an elementary quotient pair is related by a cyclic shift, we can use Grassmann conecklaces (\Cref{def:conecklace}) to characterize the shifted positions.

\begin{theorem}[Formal statement of \Cref{thm:cyclicshift}]\label{thm:shift}
    Let $(M,N)$ be a flag positroid over $[n]$ with $\rk(M)=\rk(N)-1$.
    If $\sigma^{:}$ and $\pi^{:}$ are the decorated permutations of $M$ and $N$ respectively, 
    then $\sigma^{:} = \overrightarrow{\rho_{A}}(\pi^{:})$, where $A = [n] \setminus \bigcup_{i \in [n]}(J^{\pi}_{i} \setminus J^{\sigma}_{i})$. Here  $(J^{\sigma}_{i})_{i\in [n]}$ and $(J^{\pi}_{i})_{i\in [n]}$ are the Grassmann conecklaces of $M$ and $N$, respectively.
\end{theorem}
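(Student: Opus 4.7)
The plan is to reduce the theorem to a statement about Grassmann necklace/conecklace containment and then appeal to the bridge between such containments and cyclic shifts developed earlier in this section (namely \Cref{thm:neckcontain=shift}).

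First, I would apply \Cref{prop:quotient-implies-containment} to the flag positroid $(M,N)$, obtaining the entrywise containment $I^{\sigma}_{i} \subseteq I^{\pi}_{i}$ for every $i \in [n]$, where $I^{\sigma}$ and $I^{\pi}$ denote the Grassmann necklaces of $M$ and $N$. Since $\rk(M) = \rk(N) - 1$, each difference $I^{\pi}_{i} \setminus I^{\sigma}_{i}$ is a singleton. Next, standard matroid duality combined with \Cref{prop:rank-quotient} and \Cref{prop:rank-dual} shows that $N^{*}$ is a quotient of $M^{*}$, so \Cref{prop:quotient-implies-containment} applied to $(N^{*},M^{*})$ yields a containment of the necklaces of the duals. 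Since complementing subsets reverses the Gale order $\leq_{i}$, taking complements translates this into the dual conecklace containment $J^{\sigma}_{i} \subseteq J^{\pi}_{i}$ with singleton differences.

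Second, I would invoke \Cref{thm:neckcontain=shift} to produce a subset $B \subseteq [n]$ such that $\sigma^{:} = \overrightarrow{\rho_{B}}(\pi^{:})$. The remaining task is to identify this $B$ with the set $A = [n] \setminus \bigcup_{i \in [n]}(J^{\pi}_{i} \setminus J^{\sigma}_{i})$ from the theorem statement. Using the bijection $J_{i} = \pi^{-1}(I_{i})$ from \Cref{prop:coneck deco} applied to both $M$ and $N$, together with the definition of the cyclic shift (\Cref{def:shift}), this amounts to showing that the ``active'' positions at which $\overrightarrow{\rho_{A}}$ reshuffles $\pi$-values are exactly those positions appearing in some $J^{\pi}_{i} \setminus J^{\sigma}_{i}$.

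The main obstacle I anticipate is this identification step. The union $\bigcup_{i}(J^{\pi}_{i} \setminus J^{\sigma}_{i})$ records, across all cyclic orders $<_{i}$, the singleton differences between the maximum bases of $N$ and $M$; one must translate this into the orbit structure of $\overrightarrow{\rho_{A}}$ on $[n] \setminus A$. Handling the decorations --- in particular verifying that new fixed points of $\sigma$ outside $A$ receive the correct loop label, and that loops and coloops of $\pi^{:}$ propagate correctly to $\sigma^{:}$ --- adds an additional layer of casework that must be carried out carefully.
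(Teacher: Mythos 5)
Your overall strategy is exactly the paper's: get the necklace containment $I^{\sigma}_{i}\subseteq I^{\pi}_{i}$ from \Cref{prop:quotient-implies-containment}, feed it into \Cref{thm:neckcontain=shift} to obtain some $B$ with $\sigma^{:}=\overrightarrow{\rho_{B}}(\pi^{:})$, and then identify the shifting set with $[n]\setminus\bigcup_{i}(J^{\pi}_{i}\setminus J^{\sigma}_{i})$. Your route to the conecklace containment (dualize the quotient and use that complementation reverses the Gale order) is fine and equivalent to the paper's. The problem is that the final identification step is not a routine bookkeeping exercise that you may defer: it is the actual content of the theorem beyond \Cref{thm:neckcontain=shift}, and your proposal explicitly leaves it as ``the main obstacle I anticipate'' without supplying an argument. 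In the paper this is \Cref{A}, proved by computing $J^{\pi}_{i}=\{a\in[n]\mid i\in S^{\pi}_{\pi(a)}\}$ from \Cref{prop:coneck deco,prop:decpermtoneck,lemma:grassmann matrix}, deducing from $J^{\sigma}_{i}\subseteq J^{\pi}_{i}$ that $S^{\sigma}_{\sigma(a)}\subseteq S^{\pi}_{\pi(a)}$ for every position $a$, and then showing $J^{\pi}_{i}\setminus J^{\sigma}_{i}=\{a\mid i\in(\sigma(a),\pi(a)]\}\cup\{a\mid \col_{\sigma}(a)=1,\ \col_{\pi}(a)=-1\}$, so that the union over $i$ is precisely the set of positions where $(\sigma,\col_{\sigma})$ and $(\pi,\col_{\pi})$ disagree. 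Without this computation (or an equivalent one), the equality $B=A$ is asserted, not proved, so the proposal as written has a genuine gap.

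A second, smaller point you gloss over: \Cref{thm:neckcontain=shift} only gives \emph{some} shifting set $B$, and the map $A\mapsto\overrightarrow{\rho_{A}}(\pi^{:})$ need not a priori single out the set named in the statement. To close this you must argue, from \Cref{def:shift}, that every position outside the shifting set actually changes (in value, or in decoration in the degenerate coloop-to-loop case), so the set of positions where $\sigma^{:}$ and $\pi^{:}$ agree in both value and decoration is itself a valid shifting set; only then does \Cref{A} convert that agreement set into $[n]\setminus\bigcup_{i}(J^{\pi}_{i}\setminus J^{\sigma}_{i})$, which is the theorem's $A$. Your observation that the necklace and conecklace differences are singletons is correct but is not used anywhere downstream, and is not needed for the paper's argument.
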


\begin{notation}
If a positroid $M$ is an elementary quotient of a positroid $N$, and $\sigma^{:}$ and $\pi^{:}$ are their corresponding decorated permutations respectively, then we denote $\sigma^{:} \lessdot \pi^{:}$.
\end{notation}

\begin{remark}\label{rmk:open-question}
It remains an open question whether there is a concise characterization\textemdash based solely on $\pi^{:}$\textemdash of those sets $A$ for which $\overrightarrow{\rho_{A}}(\pi^{:})\lessdot \pi^{:}$.
\end{remark}

\subsection{Relating cyclic shifts to necklace containment}

As we alluded to in the introduction, the cyclic shift operation of decorated permutations turns out to be the ``counterpart'' of containment relations in Grassmann necklaces. The main goal of this subsection is to formally establish this connection. 

To achieve this, we introduce a key tool called \emph{Grassmann matrices}, which act as a bridge between Grassmann necklaces and what we refer to as \emph{Grassmann intervals}. 

Throughout this paper, we adopt the convention that the cyclic interval $(i,i]=\emptyset$.

\begin{defin}\label{def:grassmann-matrix}
Given a decorated permutation $\pi^{:}$ on $[n]$, we define the \emph{Grassmann interval} $S_i^\pi$ associated with each $i \in [n]$ to be the cyclic interval $(\pi^{-1}(i), i]$. In the exceptional case where $\pi^:(i) = \underline{i}$, we set $S_i^\pi \coloneq [n]$.
The \emph{Grassmann matrix} $M^\pi$ is the $n \times n$ binary matrix where the $i$-th row is the indicator vector of the Grassmann interval $S_i^\pi$. Specifically, the entry $(M^\pi)_{i,j}$ is equal to $1$ if $j \in S_i^\pi$ and $0$ otherwise.
\end{defin}

We remark that the ``Grassmann interval'' used here is almost the same as the ``CW-arrow'' defined by \cite{Oh17} (see \Cref{rmk:interval-to-arrow} for further discussion). Our definition of Grassmann intervals serves to demonstrate more clearly the close relation of CW-arrows to Grassmann necklaces, as captured by the following lemma. 

In the following, for decorated permutations $\pi^{:}$ and $\sigma^{:}$ over $[n]$, we use $I^{\pi}$ and $I^{\sigma}$ to denote the corresponding Grassmann necklaces, and use $J^{\pi}$ and $J^{\sigma}$ to denote the corresponding Grassmann conecklaces, respectively. 

\begin{lemma}\label{lemma:grassmann matrix}
    Given a Grassmann matrix $M^{\pi}$, the $j$-th column of $M^{\pi}$ is the indicator vector for $I^{\pi}_j$. Consequently, the sum of each column of $M^{\pi}$, which we hereafter denote by $\rk(\pi^{:})$, coincides with $|I^{\pi}_{1}|$, the rank of the associated positroid.
\end{lemma}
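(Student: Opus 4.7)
The plan is to unpack definitions directly and show that the support of the $j$-th column of $M^{\pi}$, namely $\{i\in[n]:j\in S_{i}^{\pi}\}$, coincides with the anti-exceedance set $W_{j}(\pi^{:})$; the lemma then reduces to \Cref{prop:decpermtoneck}, which identifies $W_{j}(\pi^{:})$ with $I^{\pi}_{j}$.

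The bulk of the work is a case analysis on the decoration of a candidate $i\in[n]$. If $i$ is a coloop, then $S_{i}^{\pi}=[n]$ by definition, so $i$ appears in every column of $M^{\pi}$, which matches the fact that the clause ``$\col(i)=-1$'' in the definition of $W_{j}$ forces $i\in W_{j}(\pi^{:})$ for every $j$. If $i$ is a loop, then $\pi^{-1}(i)=i$ and the convention $(i,i]=\emptyset$ gives $S_{i}^{\pi}=\emptyset$; neither $i<_{j}\pi^{-1}(i)=i$ nor $\col(i)=-1$ holds, so $i\notin W_{j}(\pi^{:})$ either. In the remaining generic case $\pi^{-1}(i)\neq i$ with $\col(i)=0$, I need to verify
\[
j\in(\pi^{-1}(i),i]\ \Longleftrightarrow\ i<_{j}\pi^{-1}(i).
\]
This is purely a cyclic-order check: walking clockwise from $j$ in the order $<_{j}$, one reaches $i$ before $\pi^{-1}(i)$ exactly when $j$ lies on the arc from $\pi^{-1}(i)$ (exclusive) to $i$ (inclusive); the boundary cases $j=i$ and $j=\pi^{-1}(i)$ are immediate.

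Combining the three cases yields the column identification, and the consequence about column sums is then immediate: every entry of a Grassmann necklace has the same cardinality, equal to the rank of the underlying positroid by \Cref{prop:necklace-minimizes}, so each column sum of $M^{\pi}$ equals $|I^{\pi}_{1}|$, which the lemma takes as the definition of $\rk(\pi^{:})$. The argument is essentially bookkeeping; the only mildly delicate point is the cyclic-order equivalence displayed above, and I do not foresee any genuine obstacle.
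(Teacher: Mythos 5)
Your proposal is correct and follows the same route as the paper: identify the support of the $j$-th column, $\{i:j\in S_i^{\pi}\}$, with the anti-exceedance set $W_j(\pi^{:})$ and then invoke \Cref{prop:decpermtoneck}; the paper simply asserts this identification ``by definition,'' while you spell out the loop/coloop/generic case check and the cyclic-order equivalence, which is exactly the bookkeeping being glossed over. No gaps.
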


\begin{proof}
    By definition, \[(M^\pi)_{i,j}=\begin{cases}
 1, &\text{if }i\text{ is a }j\text{ anti-exceedance} \ (i<_{j}\pi^{-1}(i)\text{ or }\col_\pi(i)=-1);\\
  0, &\text{otherwise.}
 \end{cases}\]
 And hence by \Cref{prop:decpermtoneck}, the $j$-th column of $M^{\sigma}$ is the indicator vector for $I^{\sigma}_j$, and the sum of each column of $M^{\sigma}$ equals $\rk(\sigma^{:}).$
\end{proof}
\begin{eg}
     Let $\pi^{:} = \overline{1}65\overline{4}23\underline{7}$, then the Grassmann matrix is
     \[M^\pi=\begin{pmatrix}
         0&0&0&0&0&0&0\\
         1&1&0&0&0&1&1\\
         1&1&1&0&0&0&1\\
         0&0&0&0&0&0&0\\
         0&0&0&1&1&0&0\\
         0&0&1&1&1&1&0\\
         1&1&1&1&1&1&1
     \end{pmatrix}.\]
     The Grassmann interval $S_3^\pi=(\pi^{-1}(3),3]=(6,3]=\{7,1,2,3\}=\{1,2,3,7\}$ can be read through the third row of $M^\pi$, and $I^{\pi}_4=\{4,5,6\}$ of the Grassmann necklace $I^{\pi}$ can be read through the fourth column of $M^\pi$.
\end{eg}

\Cref{lemma:grassmann matrix} immediately implies the following corollary.
\begin{cor}\label{cor:neckcontain=vecdom}
    $I^{\sigma}_{i}\subseteq I^{\pi}_{i}$ for any $i\in [n]$ if and only if $S^{\sigma}_i\subseteq S^{\pi}_i$ for any $i\in[n]$
\end{cor}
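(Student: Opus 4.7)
The plan is to observe that both conditions in the corollary are equivalent to the single statement that $M^{\sigma} \leq M^{\pi}$ entrywise as $n \times n$ binary matrices, and then chain the two equivalences together via this common reformulation.

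First I would recall that, by the definition of the Grassmann matrix, the $i$-th row of $M^{\pi}$ is precisely the indicator vector of the Grassmann interval $S^{\pi}_{i}$. Hence the statement ``$S^{\sigma}_{i} \subseteq S^{\pi}_{i}$ for all $i \in [n]$'' is tautologically equivalent to the entrywise inequality $M^{\sigma} \leq M^{\pi}$, since subset containment of indicator vectors is the same as coordinatewise $\leq$. Next I would invoke \Cref{lemma:grassmann matrix}, which identifies the $j$-th column of $M^{\pi}$ with the indicator vector of $I^{\pi}_{j}$. By exactly the same reasoning applied to columns rather than rows, the statement ``$I^{\sigma}_{j} \subseteq I^{\pi}_{j}$ for all $j \in [n]$'' is equivalent to $M^{\sigma} \leq M^{\pi}$ entrywise.

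Combining the two equivalences yields the desired if-and-only-if. The proof is essentially one line once \Cref{lemma:grassmann matrix} is in hand: both necklace containment and Grassmann-interval containment are two ways of reading off, via columns and rows respectively, the same entrywise matrix inequality.

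I do not anticipate any serious obstacle, but I would briefly sanity-check the two degenerate cases encoded in \Cref{def:grassmann-matrix} to make sure the row/column descriptions remain consistent. Namely, when $\pi^{:}(i) = \underline{i}$ (a coloop), the convention $S^{\pi}_{i} = [n]$ forces row $i$ of $M^{\pi}$ to be all ones, which agrees with $i$ being a $j$-anti-exceedance for every $j$ since $\col_{\pi}(i) = -1$; and when $\pi^{:}(i) = \overline{i}$ (a loop), the convention $(i,i] = \emptyset$ forces row $i$ to be all zeros, which agrees with $i$ never being a $j$-anti-exceedance. These checks ensure the row-indicator description used in the argument matches the column-indicator description provided by \Cref{lemma:grassmann matrix} even at fixed points, so the two containment conditions reduce to the same matrix inequality without exception.
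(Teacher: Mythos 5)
Your proof is correct and is essentially the paper's own argument: both containment conditions are identified with the entrywise inequality $M^{\sigma}\leq M^{\pi}$, using the row description from \Cref{def:grassmann-matrix} and the column description from \Cref{lemma:grassmann matrix}. The extra check of the loop and coloop conventions is a reasonable (if optional) addition.
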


\begin{proof}
By \Cref{lemma:grassmann matrix} and \Cref{def:grassmann-matrix} respectively, both conditions are equivalent to $(M^{\sigma})_{i,j}\le (M^{\pi})_{i,j}$ for any $i,j\in [n]$.
\end{proof}

To establish connections with the cyclic shift $\sigma^{:} = \overrightarrow{\rho_{A}}(\pi^{:})$, we define \emph{shift intervals} $S^{\pis}_i$. In fact, shift intervals record the rows of $M^\pi-M^\sigma$.

\begin{defin}\label{def:vec_w}
Given two decorated permutations $\sigma^:, \pi^:$ over $[n]$, 
we define the \emph{$i$-th shift interval} as cyclic interval $(\pi^{-1}(i), \sigma^{-1}(i)]$. In the exceptional case where $\sigma^:(i)=\overline{i}$ and $\pi^:(i)=\underline{i}$, we set $S^{\pis}_i\coloneq [n]$.
\end{defin}

\begin{eg}
    Let $\pi^:=456123$ and $\sigma^:=2461\overline{5}3$. We have
    \begin{align*}
         &S^{\pi}_2=(\pi^{-1}(2),2]=\{6,1,2\}, \quad S^{\sigma}_2=(\sigma^{-1}(2),2]=\{2\}, \text{ and}\\
        &S^\pis_2=(\pi^{-1}(2),\sigma^{-1}(2)]=\{6,1\}=S^{\pi}\setminus S^{\sigma}.
    \end{align*}
\end{eg}

We use shift intervals as an intermediary to bridge Grassmann necklace containment and the cyclic shift. In the next two lemmas, we separately show that both the necklace containment relation and the cyclic shift condition are equivalent to the same condition on shift intervals, thereby establishing the desired equivalence between necklace containment and the cyclic shift.

\begin{lemma}\label{lemma:Gr contain and shift}
     Given two decorated permutations $\sigma^:, \pi^: $ over $[n]$ where $\rk(\sigma^:)=\rk(\pi^:)-1$, $\bigsqcup_{i=1}^{n}S^\pis_i=[n]$ if and only if $S^{\sigma}_i\subseteq S^{\pi}_i$ for any $i\in[n]$. 
 \end{lemma}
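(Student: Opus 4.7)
The plan is to reduce both directions to a common quantitative condition by computing $\sum_{i=1}^{n}|S^\pis_i|$ in two ways. The central step I would carry out first is the following pointwise identity: for each $i \in [n]$,
\[
|S^\pis_i| \;=\; |S^\pi_i| - |S^\sigma_i| + n\,c_i \qquad \text{for some } c_i \in \{0,1\},
\]
where $c_i = 0$ if and only if $S^\sigma_i \subseteq S^\pi_i$, and where moreover $c_i = 0$ forces the set-level identity $S^\pis_i = S^\pi_i \setminus S^\sigma_i$. I would prove this by a case analysis on the cyclic position of $\sigma^{-1}(i)$ relative to the arc $(\pi^{-1}(i), i]$: if $\sigma^{-1}(i)$ lies in this arc then the three cyclic intervals fit together without wrap-around and yield $c_i = 0$ together with the claimed set equality; if instead $\sigma^{-1}(i)$ sits on the complementary arc, then $(\pi^{-1}(i),\sigma^{-1}(i)]$ wraps past $i$ and we get $c_i = 1$ together with $S^\sigma_i \not\subseteq S^\pi_i$. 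I expect the main technical obstacle to lie in the exceptional cases where $\pi^:(i)$ or $\sigma^:(i)$ is a loop or coloop, since these override the general cyclic-interval formula and must be matched subcase-by-subcase against the convention $S^\pi_i = [n]$, $S^\sigma_i = [n]$, or $S^\pis_i = [n]$. The verification is routine but requires patience.

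Once the pointwise identity is secured, summing over $i$ and invoking \Cref{lemma:grassmann matrix} (so that $\sum_i |S^\pi_i| = n\cdot \rk(\pi^:)$ and $\sum_i |S^\sigma_i| = n\cdot \rk(\sigma^:)$) together with the rank hypothesis $\rk(\pi^:) - \rk(\sigma^:) = 1$ yields
\[
\sum_{i=1}^{n} |S^\pis_i| \;=\; n\Bigl(1 + \sum_{i=1}^{n} c_i\Bigr).
\]
This single formula drives the backward direction immediately: if $\bigsqcup_i S^\pis_i = [n]$ then $\sum_i |S^\pis_i| = n$, which forces $\sum_i c_i = 0$ and hence every $c_i = 0$, which by the first step is precisely $S^\sigma_i \subseteq S^\pi_i$ for every $i$.

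For the forward direction, the hypothesis $S^\sigma_i \subseteq S^\pi_i$ for all $i$ gives every $c_i = 0$ and the set identity $S^\pis_i = S^\pi_i \setminus S^\sigma_i$. Let $N$ be the $n \times n$ matrix whose $i$-th row is the indicator vector of $S^\pis_i$; this set identity says $N = M^\pi - M^\sigma$ entrywise. Each column of $N$ then sums to $\rk(\pi^:) - \rk(\sigma^:) = 1$, and since $N$ is $\{0,1\}$-valued, each column contains exactly one $1$. Equivalently, every element of $[n]$ lies in exactly one $S^\pis_i$, which is the disjoint-union equality $\bigsqcup_i S^\pis_i = [n]$. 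Thus, modulo the case analysis establishing the pointwise identity, the entire argument is a short row-sum/column-sum bookkeeping built on top of \Cref{lemma:grassmann matrix}.
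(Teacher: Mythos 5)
Your proposal is correct and takes essentially the same route as the paper: both reduce the statement to cardinality bookkeeping via \Cref{lemma:grassmann matrix} (column sums of the Grassmann matrices give the ranks), with your pointwise identity $|S^\pis_i|=|S^\pi_i|-|S^\sigma_i|+nc_i$ being just a sharpened, more explicit form of the paper's ``triangle inequality'' $|S^\pis_i|\geq|S^\pi_i|-|S^\sigma_i|$ (with equality exactly when $S^{\sigma}_i\subseteq S^{\pi}_i$, forcing $S^\pis_i=S^\pi_i\setminus S^\sigma_i$), and your matrix identity $N=M^\pi-M^\sigma$ matching the paper's multiset-difference argument for the other direction. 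The deferred loop/coloop case analysis you flag does check out under the stated conventions, so there is no gap.
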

\begin{proof}
    \textbf{The ``if'' direction:}
by \Cref{lemma:grassmann matrix}, as multi-sets, $\bigsqcup_{i=1}^n S^{\pi}_i$ is $\rk(\pi^:)$ copies of $[n]$, and $\bigsqcup_{i=1}^n S^{\sigma}_i$ is $\rk(\sigma^:)$ copies of $[n]$. Hence, if $S^{\sigma}_i\subseteq S^{\pi}_i$ for any $i\in[n]$, then we have
\[[n]=\rk(\pi^:)[n]-\rk(\sigma^:)[n]=\left(\bigsqcup_{i=1}^n S^{\pi}_i\right)\setminus \left(\bigsqcup_{i=1}^n S^{\sigma}_i\right)=\bigsqcup_{i=1}^n \left(S^{\pi}_i\setminus S^{\sigma}_i\right)=\bigsqcup_{i=1}^n S^\pis_i.
\]

\textbf{The ``only if'' direction:} if $\bigsqcup_{i=1}^{n}S^\pis_i=[n]$, then by the triangle inequality,
\[n=\sum_{i=1}^n|S^\pis_i| \geq \sum_{i=1}^n\left(|S^{\pi}_i|-|S^{\sigma}_i|\right)=\sum_{i=1}^n|S^{\pi}_i|-\sum_{i=1}^n|S^{\sigma}_i|=n\left(\rk(\pi^:)-\rk(\sigma^:)\right)=n.\]
Equality holds only if $S^{\sigma}_i\subseteq S^{\pi}_i$ for any $i\in[n]$.
\end{proof}

\begin{lemma}\label{lem:vec=shift}
Given two decorated permutations $\sigma^:, \pi^:$ over $[n]$ where $\rk(\sigma^:)=\rk(\pi^:)-1$, $\sigma^{:}=\overrightarrow{\rho_{A}}(\pi^{:})$ for some $A\subseteq [n]$ if and only if $\bigsqcup_{i=1}^{n}S^\pis_i=[n]$.
\end{lemma}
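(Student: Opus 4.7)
The plan is to prove both implications by an explicit analysis of the shift intervals $S_i^{\pis}$, with the exception case $S_i^{\pis} = [n]$ requiring separate attention.

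For the forward direction, suppose $\sigma^{:} = \overrightarrow{\rho_A}(\pi^{:})$, and list $[n] \setminus A = \{p_1, \dots, p_m\}$ in cyclic order. The shift rule gives $\sigma(p_{k+1}) = \pi(p_k)$ on $[n] \setminus A$ (indices cyclic) while $\sigma = \pi$ on $A$. A direct computation then shows that $S_i^{\pis} = \emptyset$ whenever $\pi^{-1}(i) \in A$, and $S_i^{\pis} = (p_k, p_{k+1}]$ whenever $\pi^{-1}(i) = p_k$. For $m \geq 2$ these cyclic intervals partition $[n]$, so $\bigsqcup_i S_i^{\pis} = [n]$. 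The rank hypothesis rules out $m = 0$ and the sub-cases of $m = 1$ in which $p_1$ is not a fixed point of $\pi$ or is a loop of $\pi$ (in all of which one gets $\sigma^{:} = \pi^{:}$). The remaining case, $m = 1$ with $p_1$ a coloop of $\pi$, triggers the exception $S_{p_1}^{\pis} = [n]$, and again $\bigsqcup_i S_i^{\pis} = [n]$.

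For the reverse direction, set $T := \{i \in [n] : S_i^{\pis} \neq \emptyset\}$. If $S_k^{\pis} = [n]$ for some $k$, then necessarily $\sigma^{:}(k) = \overline{k}$ and $\pi^{:}(k) = \underline{k}$; by disjointness $T = \{k\}$, and setting $A := [n] \setminus \{k\}$ realizes $\sigma^{:}$ as $\overrightarrow{\rho_A}(\pi^{:})$ directly. Otherwise every nonempty shift interval is a proper cyclic arc, and the partition condition forces the arcs to chain: the position immediately after $\sigma^{-1}(i)$ must start another arc, producing a bijection $\phi: T \to T$ with $\pi^{-1}(\phi(i)) = \sigma^{-1}(i)$. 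I set $P := \{\sigma^{-1}(i) : i \in T\}$ (which coincides with $\{\pi^{-1}(i) : i \in T\}$ since $\phi$ permutes $T$) and define $A := [n] \setminus P$. The chain identity translates immediately into $\sigma(k) = \pi(\max_{<_k} P)$ for $k \in P$, while for $k \in A$ the relation $\sigma(k) \notin T$ gives $S_{\sigma(k)}^{\pis} = \emptyset$, hence $\pi(k) = \sigma(k)$.

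The main obstacle is matching the colorings on fixed points of $\pi$ and $\sigma$. The key tool is \Cref{lemma:Gr contain and shift}, which, under the rank hypothesis, upgrades the partition condition to the Grassmann-interval containment $S_i^\sigma \subseteq S_i^\pi$ for every $i$. Combined with the convention $S_i = [n]$ for a coloop and $S_i = \emptyset$ for a loop, this eliminates the bad configurations. For instance, if $k \in P$ satisfies $\sigma(k) = k$ with $\col_\sigma(k) = -1$, then $S_k^\sigma = [n] \subseteq S_k^\pi$ forces $\pi^{:}(k) = \underline{k}$; the chain identity $\sigma(k) = \pi(\max_{<_k} P)$ together with the injectivity of $\pi$ then forces $|P| = 1$, returning us to the exception case excluded in this sub-argument. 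A completely analogous case split on the color at each fixed point in $A$ then matches $\col_\pi$ with $\col_\sigma$ on $A$, completing the identification $\sigma^{:} = \overrightarrow{\rho_A}(\pi^{:})$.
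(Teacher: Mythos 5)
Your proof is correct and follows essentially the same route as the paper's: in both directions you analyze how the nonempty shift intervals $S^{\pis}_i$ tile the circle, take $A$ to be the complement of their endpoints, and invoke \Cref{lemma:Gr contain and shift} (via $S^{\sigma}_i\subseteq S^{\pi}_i$) to rule out the bad loop/coloop decorations, with the $|[n]\setminus A|\le 1$ and full-interval cases handled separately exactly as in the paper. Your write-up is if anything a bit more explicit about the exceptional case and the color matching on $A$, but it is not a different argument.
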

\begin{proof}
\textbf{The ``only if'' direction:} we assume that $\sigma^{:}= \overrightarrow{\rho_{A}}(\pi^{:})$ for some $A\subseteq [n]$. If $|A|=n$ then $\sigma^{:}=\pi^{:}$, contradicting the assumption $\rk(\sigma^{:})=\rk(\pi^{:})-1$. If $|A|=n-1$, then $\sigma^{:}\neq \pi^{:}$ implies the shifted element turns from a coloop to a loop. Let $i$ be the only shifted element, then $S^\pis_{i}=[n]$ and $S^\pis_{j}=\emptyset$ for any $j\in [n]\setminus \left\{i\right\}$, and consequently statement $\bigsqcup_{i=1}^{n}S^\pis_i=[n]$ holds.

Now assume $|A|\leq n-2$. Let $[n]\setminus A=\left\{i_{1},\dots,i_{\ell}\right\}$, where $i_{1}<_{1}\dots<_{1}i_{\ell}$. Denote $i_{\ell+1}=i_{1}$. Then we know from \Cref{def:shift} that $\pi([n]\setminus A)$ are the shifted numbers. For each $j\not\in \pi([n]\setminus A)$, by \Cref{def:vec_w} we have $S^\pis_{j}=\emptyset$. We also know from \Cref{def:shift} that $\sigma^{-1}(\pi(i_{r}))=i_{r+1}$ for each $r\in [\ell]$. So 
\[[n]=\bigsqcup_{r=1}^{\ell}(i_{r},i_{r+1}]=\bigsqcup_{r=1}^{\ell}\left(\pi^{-1}(\pi(i_{r})),\sigma^{-1}(\pi(i_{r}))\right]=\bigsqcup_{r=1}^{\ell}S^\pis_{\pi(i_{r})}=\bigsqcup_{i=1}^{n}S^\pis_i.\]

\textbf{The ``if'' direction:} assume that $\bigsqcup_{i=1}^{n}S^\pis_i=[n]$ holds. Reorder all the non-empty cyclic intervals as $S_{\pi(i_1)}^{\pi,\sigma}, \dots, S_{\pi(i_\ell)}^{\pi,\sigma}$ such that $i_1<_1 \dots<_1 i_\ell$. We claim $\sigma^:=\overrightarrow{\rho_{A}}(\pi^{:})$ for $A\coloneq[n]\setminus\{i_{1},\dots,i_{\ell}\}$. 

Note that the left endpoint of each cyclic interval $S^\pis_{\pi(i_r)}$ is $\pi^{-1}(\pi(i_{r}))=i_r$. By $\bigsqcup_{i=1}^{n}S^\pis_i=[n]$ and $i_1<_1 \dots<_1 i_\ell$, we have $S^\pis_{\pi(i_r)}=(i_r, i_{r+1}]$. Moreover, by \Cref{def:vec_w}, $S^\pis_{\pi(i_r)}=(\pi^{-1}(\pi(i_r)), \sigma^{-1}(\pi(i_r))]$, and hence $i_{r+1}=\sigma^{-1}(\pi(i_r))$ for any $r\in [\ell]$.

The shifted numbers of $\overrightarrow{\rho_{A}}(\pi^{:})$ are $\pi([n]\setminus A)$, corresponding to $\pi(i_1), \pi(i_2),\dots, \pi(i_{\ell})$. We have shown that $\sigma(i_{r+1})=\pi(i_r)$ for any $r\in [\ell]$. Moreover, as \Cref{lemma:Gr contain and shift} shows each $S^{\sigma}_i$ is contained in $S^{\pi}_i$, which would be violated if any new fixed point of $\sigma^:$ were a coloop.
By \Cref{def:shift}, we conclude that $\sigma^:=\overrightarrow{\rho_{A}}(\pi^{:})$, as claimed.
\end{proof}

We now conclude with the following theorem.

\begin{theorem}\label{thm:neckcontain=shift}
    Let $M$ and $N$ be two positroids on $[n]$, and $\rk(M)=\rk(N)-1$. Let $\sigma^{:}$ and $\pi^{:}$ be the decorated permutations of $M$ and $N$, and $I^{\sigma}$ and $I^{\pi}$ be the Grassmann necklaces of $M$ and $N$, respectively. Then, $I^{\sigma}_{i}\subseteq I^{\pi}_{i}$ for all $i\in [n]$ if and only if there exists $A\subseteq [n]$ such that $\sigma^{:} = \overrightarrow{\rho_{A}}(\pi^{:})$.
\end{theorem}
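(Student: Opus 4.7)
The plan is to obtain this theorem as an immediate chain of equivalences assembled from the three results already established in this subsection. Specifically, \Cref{cor:neckcontain=vecdom} translates necklace containment into the coordinate-wise containment $S^{\sigma}_i \subseteq S^{\pi}_i$ of Grassmann intervals; \Cref{lemma:Gr contain and shift} translates this coordinate-wise containment (under the rank hypothesis $\rk(\sigma^{:}) = \rk(\pi^{:}) - 1$) into the partition statement $\bigsqcup_{i=1}^n S^{\pi,\sigma}_i = [n]$ about shift intervals; and \Cref{lem:vec=shift} translates the partition statement (again under the rank hypothesis) into the existence of a set $A$ with $\sigma^{:} = \overrightarrow{\rho_A}(\pi^{:})$.

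Concretely, I would first note that the rank hypothesis needed for the two lemmas follows from our assumption $\rk(M) = \rk(N) - 1$ together with \Cref{lemma:grassmann matrix}, which identifies $\rk(\pi^{:})$ with $|I^{\pi}_1| = \rk(N)$, and similarly for $\sigma^{:}$. Once this is in place, the proof is just the concatenation
\[
I^{\sigma}_i \subseteq I^{\pi}_i \ (\forall i)
\iff S^{\sigma}_i \subseteq S^{\pi}_i \ (\forall i)
\iff \bigsqcup_{i=1}^{n} S^{\pi,\sigma}_i = [n]
\iff \exists\, A\subseteq[n],\ \sigma^{:}=\overrightarrow{\rho_A}(\pi^{:}),
\]
applying \Cref{cor:neckcontain=vecdom}, \Cref{lemma:Gr contain and shift}, and \Cref{lem:vec=shift} in turn.

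Because the three equivalences have already been packaged as lemmas, no genuine obstacle remains for this theorem itself. The real conceptual content\textemdash the step that would be hardest if presented from scratch\textemdash is the discovery that the shift intervals $S^{\pi,\sigma}_i$ form the right intermediate object: they simultaneously encode the row-wise difference $M^{\pi}-M^{\sigma}$ of Grassmann matrices (so that their total cardinality matches the rank gap exactly when containment holds) and the combinatorics of which positions are frozen by $\overrightarrow{\rho_A}$ (since the non-empty shift intervals are precisely $(i_r,i_{r+1}]$ for the complement of $A$). Both aspects are handled inside \Cref{lemma:Gr contain and shift,lem:vec=shift}, making the present theorem a clean wrap-up.
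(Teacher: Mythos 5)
Your proposal is correct and matches the paper's own proof, which likewise obtains the theorem by chaining \Cref{cor:neckcontain=vecdom}, \Cref{lemma:Gr contain and shift}, and \Cref{lem:vec=shift}; your explicit note that the rank hypothesis follows from \Cref{lemma:grassmann matrix} is a fine (if routine) addition.
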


\begin{proof}
The conclusion follows directly by combining \Cref{lemma:Gr contain and shift,lem:vec=shift} and \Cref{cor:neckcontain=vecdom}.
\end{proof}

\subsection{Characterizing shifted positions}

As stated in \Cref{thm:shift}, for elementary quotient pairs, we can characterize the shifted positions using Grassmann conecklaces. We first prove the following lemma that connects Grassmann conecklaces with positions of decorated permutations.

\begin{lemma}\label{A} % \jiyang{use shift interval}
Assume $\sigma^{:}=(\sigma,\col_{\sigma})$ and $\pi^{:}=(\pi,\col_{\pi})$ are decorated permutations on $[n]$, and let $J^{\sigma}$ and $J^{\pi}$ be the Grassmann conecklaces of $\sigma^{:}$ and $\pi^{:}$, respectively. If $J^{\sigma}_{i}\subseteq J^{\pi}_{i}$ for any $i\in [n]$, then we have
\[\left\{a\in [n]\mid \sigma(a)\neq \pi(a)\text{ or }\col_{\sigma}(a)\neq \col_{\pi}(a)\right\}=\bigcup_{i=1}^{n}\left(J^{\pi}_{i}\setminus J^{\sigma}_{i}\right).\]
\end{lemma}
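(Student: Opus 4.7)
The plan is to first convert the statement into an explicit combinatorial condition on cyclic arcs, and then carry out a short case analysis on the colors. Combining \Cref{prop:coneck deco} with the anti-exceedance description in \Cref{prop:decpermtoneck} and making the substitution $a = \pi^{-1}(j)$, I would first establish the formula
\[J^{\pi}_i = \left\{a \in [n] \mid \pi(a) <_i a \text{ or } \col_{\pi}(a) = -1\right\},\]
and the analogous formula for $J^{\sigma}_i$. In particular, whenever $\col_{\pi}(a) \neq -1$, one has $a \in J^{\pi}_i$ if and only if $i$ lies in the cyclic arc $(a, \pi(a)]$ (which is empty when $\pi(a) = a$). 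This reduces the whole lemma to a concrete statement about such cyclic arcs.

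The inclusion $\supseteq$ is immediate from the formula: if $\sigma(a) = \pi(a)$ and $\col_{\sigma}(a) = \col_{\pi}(a)$, then the defining conditions of $a \in J^{\sigma}_i$ and $a \in J^{\pi}_i$ are identical for every $i$, so $a$ can never belong to $J^{\pi}_i \setminus J^{\sigma}_i$.

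For $\subseteq$, I would enumerate the color pairs $(\col_{\pi}(a), \col_{\sigma}(a))$. Specializing the hypothesis $J^{\sigma}_i \subseteq J^{\pi}_i$ to $i = a$ immediately shows that $\col_{\sigma}(a) = -1$ forces $\col_{\pi}(a) = -1$ (since $\pi(a) <_a a$ is impossible, so $a \in J^{\pi}_a$ requires $\col_{\pi}(a) = -1$); a symmetric argument shows $\col_{\pi}(a) = 1$ forces $\col_{\sigma}(a) = 1$. In the remaining cases where $\sigma^{:}$ and $\pi^{:}$ actually differ at $a$, the color pair $(\col_{\pi}(a), \col_{\sigma}(a))$ must be one of $(-1, 0)$, $(-1, 1)$, $(0, 1)$, or $(0, 0)$ with $\sigma(a) \neq \pi(a)$. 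The first three are straightforward: in each of them at least one of $\col_{\pi}(a) = -1$ (forcing $a \in J^{\pi}_i$ for every $i$) or $\col_{\sigma}(a) = 1$ (forcing $a \notin J^{\sigma}_i$ for every $i$) holds, so only the other condition needs to be arranged, which is always possible by choosing $i$ in the corresponding non-empty cyclic arc.

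The main obstacle is the remaining subcase $\col_{\pi}(a) = \col_{\sigma}(a) = 0$ with $\sigma(a) \neq \pi(a)$, where I need $i \in (a, \pi(a)] \setminus (a, \sigma(a)]$. The key observation is that specializing the hypothesis $J^{\sigma}_i \subseteq J^{\pi}_i$ to this particular $a$ and varying $i$ over $[n]$ is precisely equivalent to the cyclic arc containment $(a, \sigma(a)] \subseteq (a, \pi(a)]$. Since $\sigma(a) \neq \pi(a)$, this containment is strict, so the difference $(\sigma(a), \pi(a)]$ is non-empty, and any $i$ in it witnesses $a \in J^{\pi}_i \setminus J^{\sigma}_i$, completing the proof.
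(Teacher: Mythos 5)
Your proposal is correct and follows essentially the same route as the paper: both translate conecklace membership into the cyclic-arc condition $a\in J^{\pi}_{i}\iff i\in(a,\pi(a)]$ (the paper's Grassmann intervals $S^{\pi}_{\pi(a)}$) via \Cref{prop:coneck deco} and \Cref{prop:decpermtoneck}, use the hypothesis to get the per-element containment $(a,\sigma(a)]\subseteq(a,\pi(a)]$, and identify the difference with the arc $(\sigma(a),\pi(a)]$. The only difference is presentational: you unpack the loop/coloop situations as an explicit case analysis on $(\col_{\pi}(a),\col_{\sigma}(a))$, whereas the paper absorbs them into the $S^{\pi}_{i}=[n]$ (coloop) and empty-interval (loop) conventions and does a single set computation.
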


\begin{proof}
    By \Cref{prop:coneck deco}, \Cref{prop:decpermtoneck}, and \Cref{lemma:grassmann matrix}, we have 
    \begin{align*}
        J^{\pi}_i&=\pi^{-1}(I^{\pi}_i)=\left\{\pi^{-1}(a)\in [n] \mid a<_{i}\pi^{-1}(a)\text{ or }\col_{\pi}(a)=-1\right\}\\
        &=\left\{\pi^{-1}(a)\in [n] \mid i\in S^{\pi}_a\right\}=\{a\in[n]\mid i\in S^{\pi}_{\pi(a)}\}.
    \end{align*}
    Therefore, since $J^{\sigma}_{i}\subseteq J^{\pi}_{i}$ for any $i\in [n]$, we have $S^{\sigma}_{\sigma(a)}\subseteq S^{\pi}_{\pi(a)}$ for any $a\in [n]$. Hence, \
\begin{align*}
   J^{\pi}_{i}\setminus J^{\sigma}_{i}&=\left\{a\in[n]\mid i\in S^{\pi}_{\pi(a)}\setminus S^{\sigma}_{\sigma(a)} \right\}\\
   &=\left\{a\in[n]\mid i\in (\sigma(a),\pi(a)]\right\}\cup\left\{a\in [n]\mid\col_{\sigma}(a)=1, \col_{\pi}(a)=-1\right\}.
    \end{align*}
    That is to say, \[\bigcup_{i=1}^{n}\left(J^{\pi}_{i}\setminus J^{\sigma}_{i}\right)=\left\{a\in[n]\mid  (\sigma(a),\pi(a)]\neq \emptyset\right\}\cup\left\{a\in [n]\mid\col_{\sigma}(a)=1, \col_{\pi}(a)=-1\right\},\]
    or equivalently, \[\bigcup_{i=1}^{n}\left(J^{\pi}_{i}\setminus J^{\sigma}_{i}\right)=\left\{a\in [n]\mid \sigma(a)\neq \pi(a)\text{ or }\col_{\sigma}(a)\neq \col_{\pi}(a)\right\},\]as desired.
    \end{proof}

We are now ready to conclude the proof of \Cref{thm:shift}.

\begin{proof}[Proof of \Cref{thm:shift}]\label{proof:thm-shift}
By \Cref{prop:quotient-implies-containment} and \Cref{prop:necklace-minimizes}, if the positroids in \Cref{thm:neckcontain=shift} form a flag positroid $(M,N)$, the Grassmann necklaces satisfy $I^{\sigma}_{i}\subseteq I^{\pi}_{i}$. Therefore, \Cref{thm:neckcontain=shift} implies that there exists $A\subseteq [n]$ such that $\sigma^{:}=\overrightarrow{\rho_{A}}(\pi^{:})$. By the definition of cyclic shift (\Cref{def:shift}), we must have $A=\left\{a\in [n]\mid \sigma(a)=\pi(a)\text{ and }\col_{\sigma}(a)=\col_{\pi}(a)\right\}$, which is equal to $[n]\setminus \bigcup_{i=1}^{n}\left(J^{\pi}_{i}\setminus J^{\sigma}_{i}\right)$ by \Cref{A}.
\end{proof}

\section{Positroid quotients of uniform matroids}\label{sec:4}

The goal of this section is to prove \Cref{thm:uniformCW}, which gives a complete characterization of flag positroids $(M,U_{k,n})$ in terms of CW-arrows. Our proof relies on input from the previous papers \cite{Oh17,mcalmon2020rank}, which is discussed in \Cref{subsec:previous-results}. We then present our contributions and prove \Cref{thm:uniformCW} in \Cref{subsec:our-ideas}. Finally, in \Cref{subsec:elementary-quotients}, we explore the $r=1$ special case of \Cref{thm:uniformCW} and its relation to \Cref{thm:cyclicshift}. 

\subsection{Preparations and previous results}\label{subsec:previous-results}
We first introduce the definition of CW-arrows.
\begin{defin}[\cite{Oh17}]
    Let $\pi^: = (\pi, \col)$ be a decorated permutation on $[n]$. For each $i \in [n]$, the \emph{CW-arrow} starting at $i$ is defined by\[
    C_{i}=\begin{cases}
[n],&\text{if }\pi(i)=i\text{ and }\col(i)=-1,\\
\{j\in [n]:j\leq_{i}\pi(i)\}, &\text{otherwise.}
\end{cases}
    \]
\end{defin}

\begin{remark}\label{rmk:interval-to-arrow}
The intuition behind CW-arrows is that, we put $1, \dots, n$ on a circle in the clockwise order, and draw an arrow from $i$ to $\pi(i)$ clockwise. The numbers covered by the arrow are in the CW-arrow starting at $i$. When $i$ is a coloop, the arrow goes through the entire circle, whereas when $i$ is a loop, its CW-arrow is a singleton. Moreover, CW-arrows can be viewed as Grassmann intervals with their left endpoints included.
\end{remark}

We give the readers some working examples to check \Cref{thm:uniformCW}.

\begin{eg}\label{eg:cw}
    Consider the uniform matroid $U_{4,6}$, a positroid $P$ of rank $2$ whose decorated permutation is $\overline{1}54623$, and a positroid $Q$ of rank $1$ whose decorated permutation is $6 \overline{2345} 1$. The CW-arrows of $P$ are $1, 2345, 34, 456, 5612$, $6123$. The readers could check that the union of any $3$ of them has cardinality of at least $5$. The CW-arrows of $Q$ are $123456,2,3,4,5,61$. If we take $2,3,4,5$, the cardinality of the union of these $4$ CW-arrows is less than $5$. Therefore, $P$ is a quotient of $U_{4,6}$, but $Q$ is not.
\end{eg}

We then define the \emph{CW-function}, which counts the number of CW-arrows contained in a set.

\begin{defin}[\cite{Oh17}]
Let $M$ be a positroid on $[n]$ without coloops and let $\rk_{M}:2^{[n]}\rightarrow \mathbb{N}$ be its rank function. If $(C_{1},\dots,C_{n})$ are the CW-arrows of $M$, then the \emph{CW-function} $\cw_{M}:2^{[n]}\rightarrow \mathbb{N}$ is defined by
    \[
    \cw_{M}(A):=
\begin{cases}
\sum_{i=1}^{n}\mathbbm{1}\{C_{i}\subseteq A\},&\text{if }A\neq [n],\\
n-\rk_{M}([n]),&\text{if }A=[n].
\end{cases}
    \]
\end{defin}

Readers might concern about the assumption that $M$ has no coloops. This is because later in the proof of the theorem, we always only deal with positroids with no coloops. 

\begin{remark}\label{remark:cw le r}
    Given the definition, the condition in \Cref{thm:uniformCW} could be rephrased as ``for any $k$-element subset $A$ of $[n]$ we have $\cw_{M}(A) \le r$.''
\end{remark}

\begin{eg}
    Consider the matroids from \Cref{eg:cw}. We have $\cw_{P}(3456) = 2$ and $\cw_{Q}(1456) = 3$. 
\end{eg}

The following lemma shows that we can read the rank of a cyclic interval from the CW-function.

\begin{lemma}[{\cite[Proposition 3.4]{Oh17}}]\label{lem:rankcyclic}
Let $M$ be a positroid on $[n]$. Let $(I_{1},\dots,I_{n})$ be the Grassmann necklace associated with $M$. If 
$J$ is a cyclic interval of the form $J=\{\ell\in [n]:\ell\leq_{i} j\}$ for some $j\in [n]$, then we have
$$\rk_{M}(J)=|J|-\cw_{M}(J).$$
\end{lemma}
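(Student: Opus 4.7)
The plan is to show $\rk_M(J)=|I_i\cap J|$ and then exhibit a bijection between $J\setminus I_i$ and the set of CW-arrows contained in $J$. The case $J=[n]$ is immediate from the definition $\cw_M([n])=n-\rk_M([n])$, so I focus on $J\neq[n]$, writing $J=\{\ell:\ell\leq_i j\}$.

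First I would prove $\rk_M(J)=|I_i\cap J|$. Since $I_i\in\mathcal{B}(M)$, we get $\rk_M(J)\geq |I_i\cap J|$. For the reverse inequality, let $B\in\mathcal{B}(M)$ and set $m=|B\cap J|$, so the $m$ smallest elements of $B$ under $<_i$ all lie in $J$. By \Cref{prop:necklace-minimizes}, $I_i=\min_{\leq_i}\mathcal{B}(M)$, so the Gale order guarantees that the $m$-th smallest element of $I_i$ in $<_i$ is $\leq_i$ the $m$-th smallest of $B$, which is $\leq_i j$. Hence the $m$ smallest elements of $I_i$ all lie in $J$, giving $|I_i\cap J|\geq m$. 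Maximizing over $B$ yields the desired equality.

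Next I would exhibit a bijection $\Phi:J\setminus I_i\to\{k\in[n]:C_k\subseteq J\}$ defined by $\Phi(a)=\pi^{-1}(a)$, with inverse $k\mapsto\pi(k)$, where $\pi^{:}$ is the decorated permutation of $M$. If $a\in J\setminus I_i$, then $a$ is not a coloop (coloops lie in every $I_i$) and by the anti-exceedance description of $I_i$ in \Cref{prop:decpermtoneck} we have $\pi^{-1}(a)\leq_i a$. Setting $k=\pi^{-1}(a)$, it follows that $i\leq_i k\leq_i a\leq_i j$, so $k\in J$; the CW-arrow $C_k$ runs clockwise from $k$ to $\pi(k)=a$, and because $J$ is the cyclic interval from $i$ to $j$ and $k\leq_i a$ within it, the entire arc stays in $J$, so $C_k\subseteq J$. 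Conversely, if $C_k\subseteq J\neq[n]$, then $k$ cannot be a coloop (whose arrow is all of $[n]$), so $C_k=\{\ell:\ell\leq_k \pi(k)\}$ contains both $k$ and $\pi(k)$; containment in $J$ forces $k\leq_i\pi(k)\leq_i j$, and taking $a=\pi(k)$ gives $\pi^{-1}(a)=k\leq_i a$ with $a$ not a coloop, so $a\in J\setminus I_i$. Combining these:
\[\rk_M(J)=|I_i\cap J|=|J|-|J\setminus I_i|=|J|-\cw_M(J).\]

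The main technical care is in handling loops and coloops uniformly. Loops $a\in J$ satisfy $\pi^{-1}(a)=a$ and are matched with the singleton arrow $C_a=\{a\}\subseteq J$, so they slot into $\Phi$ automatically; coloops are absent from both sides, since they belong to every $I_i$ and their CW-arrows equal $[n]\not\subseteq J$. Beyond this bookkeeping, the argument reduces to the transparent geometric fact that the clockwise arc from $\pi^{-1}(a)$ to $a$ lies inside the cyclic interval $J$ precisely when $\pi^{-1}(a)$ precedes $a$ in the $<_i$ order.
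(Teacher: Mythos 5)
Your argument is correct, and it is worth noting that the paper does not actually prove this lemma: it is quoted from Oh--Xiang (Proposition 3.4 of \cite{Oh17}), with only a remark that the extra case $J=[n]$ is trivial under the paper's extended definition of $\cw_{M}$. Your proposal therefore supplies a self-contained proof of the cited result. Both halves check out: the identity $\rk_{M}(J)=|I_{i}\cap J|$ follows exactly as you say from the Gale-minimality $I_{i}\leq_{G,i}B$ for every basis $B$ (\Cref{prop:necklace-minimizes}), using that $J$ is a prefix of the order $<_{i}$ so that $B\cap J$ consists of the $|B\cap J|$ smallest elements of $B$; and the maps $a\mapsto\pi^{-1}(a)$ and $k\mapsto\pi(k)$ are indeed mutually inverse between $J\setminus I_{i}$ and $\{k:C_{k}\subseteq J\}$, since by the anti-exceedance description of $I_{i}$ (\Cref{prop:decpermtoneck}) membership $a\notin I_{i}$ is exactly ``$\pi^{-1}(a)\leq_{i}a$ and $a$ not a coloop,'' and for $J\neq[n]$ the arc $C_{k}=[k,\pi(k)]$ lies in the cyclic interval $J=[i,j]$ precisely when $k\leq_{i}\pi(k)\leq_{i}j$ (if instead $\pi(k)<_{i}k$ the arc would have to pass through $j+1\notin J$). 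Your handling of loops (singleton arrows) and coloops (in every $I_{i}$, arrow equal to $[n]$) is the right bookkeeping, and the $J=[n]$ case is immediate from the definition, matching the paper's remark. The only caveat is cosmetic: the paper defines $\cw_{M}$ only for positroids without coloops, so strictly speaking the lemma is applied in that setting, but your bijection shows the formula holds verbatim even with coloops present, which is a mild strengthening rather than a gap.
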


\begin{remark}
The paper \cite{Oh17} did not define the value of the CW-function on the whole set $A=[n]$, and their Proposition 3.4 does not include this case. Nevertheless, in our \Cref{lem:rankcyclic}, the case $J=[n]$ is trivial given that we defined $\cw_{M}([n])$ to be $n-\rk_{M}([n])$.
\end{remark}

We now define a dual version of CW-arrows, which arise naturally by considering counter-clockwise arrows instead of clockwise arrows (see \Cref{rmk:interval-to-arrow}).

\begin{defin}[\cite{Oh17}]\label{def:CCW-arrows}
Let $\pi^{:}=(\pi,\col)$ be a decorated permutation. For each $i\in [n]$, the CCW-arrow starting at $i$ is defined by
\[CC_{i}=\begin{cases}
[n],&\text{if }\pi(i)=i\text{ and }\col(i)=1,\\
\{j\in [n]:j\leq_{\pi(i)}i\}, &\text{otherwise.}
\end{cases}\]
\end{defin}

The following function counts the number of CCW-arrows contained in a set. Note that here we assume that $M$ has no loops rather than $M$ has no coloops. The reason is that although we primarily deal with positroids without coloops, we only consider CCW-arrows of the matroid dual of such positroids.

\begin{defin}
Let $M$ be a positroid on $[n]$ without loops. If $(CC_{1},\dots,CC_{n})$ are the CCW-arrows of $M$, then the \emph{CCW-function} $\ccw_{M}:2^{[n]}\rightarrow \mathbb{N}$ is defined by
\[\ccw_{M}(A):=
\begin{cases}
\sum_{i=1}^{n}\mathbbm{1}\{CC_{i}\subseteq A\},&\text{if }A\neq [n],\\
\rk_{M}([n]),&\text{if }A=[n].
\end{cases}
\]
\end{defin}

The following key result from \cite{mcalmon2020rank} shows that the rank of an arbitrary subset of $[n]$\textemdash not just cyclic intervals as in \Cref{lem:rankcyclic}\textemdash can be read from the CCW-arrows, though in a less efficient manner.

\begin{lemma}[{\cite[Theorem 25]{mcalmon2020rank}}]\label{lem:main}
Let $M$ be a positroid on $[n]$ without loops and let $A\subseteq [n]$. There exists a partition $A=A_{1}\sqcup\dots\sqcup A_{t}$ such that
\begin{equation}\label{eq:MOrank}
\rk_{M}(A)=\sum_{j=1}^{t}\Big(\rk_{M}([n])-\ccw_{M}([n]\setminus A_{j})\Big).
\end{equation}
\end{lemma}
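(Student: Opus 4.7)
The plan is to reduce the problem to the cyclic-interval case of \Cref{lem:rankcyclic} via matroid duality. Since positroids are closed under duality and the decorated permutation of $M^{*}$ is $(\pi^{-1})^{:}$ with loops and coloops interchanged, a direct computation shows that the CCW-arrow of $M$ starting at $i$ coincides with the CW-arrow of $M^{*}$ starting at $\pi(i)$. Hence $\ccw_{M}(S) = \cw_{M^{*}}(S)$ for every $S \subseteq [n]$ (checking the $S = [n]$ case using $\rk_{M^{*}}([n]) = n - \rk_{M}([n])$), and the loop-freeness of $M$ guarantees $M^{*}$ has no coloops, so \Cref{lem:rankcyclic} is applicable to $M^{*}$.

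The first step is to handle the case where $[n]\setminus A$ is a cyclic interval (including $\emptyset$, i.e.\ $A = [n]$). Applying \Cref{lem:rankcyclic} to $M^{*}$ gives
\[
\rk_{M^{*}}([n]\setminus A) = |[n]\setminus A| - \cw_{M^{*}}([n]\setminus A) = |[n]\setminus A| - \ccw_{M}([n]\setminus A),
\]
and combining with \Cref{prop:rank-dual} yields $\rk_{M}(A) = \rk_{M}([n]) - \ccw_{M}([n]\setminus A)$. So the trivial partition $t = 1$, $A_{1} = A$ satisfies \Cref{eq:MOrank}.

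For general $A$, I would induct on the number of maximal cyclic arcs in $[n]\setminus A$ (call this the number of ``gaps'' of $A$). The inductive step seeks a splitting $A = A' \sqcup A''$, each with strictly fewer gaps than $A$, so that $\rk_{M}(A) = \rk_{M}(A') + \rk_{M}(A'')$; applying the induction hypothesis to $A'$ and $A''$ then yields a valid partition of $A$. A natural candidate exploits the noncrossing partition associated to the positroid $M$: if the gaps of $A$ lie in distinct blocks of this partition, then $M$ decomposes as a direct sum over these blocks and rank additivity is immediate. Otherwise, one would select a cyclic arc whose removal cleanly separates gaps within a shared block.

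The main obstacle will be establishing rank additivity in the non-trivial case where several gaps cluster inside a single connected component of $M$. Here the noncrossing structure alone does not suffice, and one must work with the explicit rank formulas provided by Grassmann necklaces together with submodularity (\Cref{prop:submodular}). An alternative, non-inductive route is to construct the partition greedily: iteratively peel off a cyclic arc $A_{j} \subseteq A$ for which $\rk_{M}(A_{j}) = \rk_{M}([n]) - \ccw_{M}([n]\setminus A_{j})$ (the existence of which is guaranteed by the single-piece case), then show that $A \setminus A_{j}$ strictly decreases a suitable complexity measure so that the procedure terminates with a valid partition satisfying \Cref{eq:MOrank}.
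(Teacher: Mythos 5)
First, note that the paper does not prove \Cref{lem:main} at all: it is imported verbatim from \cite[Theorem 25]{mcalmon2020rank}, and the paper only adds a remark explaining why the no-coloops hypothesis of that source can be dropped. So your proposal is an attempt to reprove a cited theorem. Your opening reductions are fine: the identification of the CCW-arrows of $M$ with the CW-arrows of $M^{*}$ (hence $\ccw_{M}=\cw_{M^{*}}$, including the $S=[n]$ convention) is correct and is exactly the duality the paper itself invokes later, and the resulting ``single gap'' case, where $[n]\setminus A$ is a cyclic interval, does follow from \Cref{lem:rankcyclic} applied to $M^{*}$ together with \Cref{prop:rank-dual}, with the trivial partition $t=1$.

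The general case, however, is where the entire content of the theorem lies, and both of your sketched routes break down. Since $\rk_{M}(A)\leq\sum_{j}\rk_{M}(A_{j})\leq\sum_{j}\bigl(\rk_{M}([n])-\ccw_{M}([n]\setminus A_{j})\bigr)$ for any partition, a partition satisfying \eqref{eq:MOrank} must simultaneously be rank-additive and have every part ``tight.'' Your induction presupposes that a rank-additive splitting into pieces with strictly fewer gaps always exists, but it does not: take $M=U_{1,4}$ (loop- and coloop-free) and $A=\{1,3\}$; then $\rk_{M}(A)=1$ while any splitting of $A$ gives rank sum $2$, so the only valid partition is the trivial one $A_{1}=A$, and proving its tightness for a non-interval $A$ is precisely the hard step your induction was supposed to avoid. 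Your greedy alternative is worse than incomplete: peeling off cyclic arcs $A_{j}$ with $\rk_{M}(A_{j})=\rk_{M}([n])-\ccw_{M}([n]\setminus A_{j})$ only enforces tightness of each part, not rank-additivity; in $U_{1,4}$ every CCW-arrow is all of $[4]$, so \emph{every} nonempty proper subset is tight, and the greedy output $\{1\}\sqcup\{3\}$ gives $1+1=2\neq 1=\rk_{M}(A)$, violating \eqref{eq:MOrank} despite terminating. The appeal to the noncrossing (connected-component) decomposition only reduces to connected positroids, which is where the difficulty sits, as you acknowledge. So the proposal establishes the easy interval case but leaves the theorem itself unproved; if you want a self-contained argument you would need the actual machinery of \cite{mcalmon2020rank}, rather than these two schemes.
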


\begin{remark}
The paper \cite{mcalmon2020rank} establishes their results for positroids without loops \emph{and} without coloops, for simplicity (\cite[Remark 11]{mcalmon2020rank}). However, to justify \Cref{lem:main}, we may assume without loss of generality that $M$ has no coloop so that \cite[Theorem 25]{mcalmon2020rank} still applies. Indeed, in \eqref{eq:MOrank}, deleting a coloop element contained in $A$ decreases both sides of the equation by 1, while deleting a coloop element contained in $[n]\setminus A$ leaves both sides unchanged.  
\end{remark}

\subsection{Our ideas}\label{subsec:our-ideas}

The starting point of our investigation is the question of whether positroid quotients can be characterized concisely using rank functions. However, there are two immediate obstacles to this approach: (1) although rank functions do characterize general matroid quotients (\Cref{prop:rank-quotient}), verifying this criterion requires exponential time; and (2) even for a given set in a positroid, computing its rank is not straightforward (\Cref{lem:main}). Our key idea is that when one of the two positroids is a uniform matroid, \Cref{prop:rank-quotient} admits a simplification that (when sufficiently exploited) may allow us to bypass the need to compute ranks directly, thereby avoiding the computational complexity lying in (1) and (2).

Concretely, we observe the following version of \Cref{prop:rank-quotient} for quotients of uniform matroids.

\begin{lemma}\label{lem:rank_char_of_quotient}
Let $M$ be a matroid on $[n]$ of rank $k-r$. Then $M$ is a quotient of $U_{k,n}$ if and only if $\rk_{M}(A)=k-r$ for any $k$-element subset $A\subseteq [n]$.
\end{lemma}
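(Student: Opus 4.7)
The plan is to prove this via the rank-function characterization of matroid quotients (\Cref{prop:rank-quotient}), exploiting the simple form of the rank function of $U_{k,n}$, namely $\rk_{U_{k,n}}(A)=\min(|A|,k)$. Both directions will follow from elementary manipulations; there is no real conceptual obstacle, only some bookkeeping on the sizes of sets.

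For the forward direction, I would assume $M$ is a quotient of $U_{k,n}$ and apply \Cref{prop:rank-quotient} to the inclusion $A\subseteq[n]$ for an arbitrary $k$-element set $A$. Since $\rk_{U_{k,n}}(A)=\rk_{U_{k,n}}([n])=k$, the quotient inequality reads $\rk_M([n])-\rk_M(A)\leq 0$, so $\rk_M(A)\geq\rk_M([n])=k-r$. Monotonicity of the rank function gives the reverse inequality, and hence $\rk_M(A)=k-r$.

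For the backward direction, the hypothesis concerns only $k$-element sets, so the key preparatory step is to leak this information to sets of other sizes. I would prove two auxiliary bounds: (i) if $|A|\leq k$, extending $A$ to a $k$-element superset $A'$ and using the standard bound $\rk_M(A')\leq \rk_M(A)+|A'\setminus A|$ gives $\rk_M(A)\geq |A|-r$; (ii) if $|A|\geq k$, restricting to a $k$-element subset $A''\subseteq A$ yields $\rk_M(A)\geq \rk_M(A'')=k-r$, and this must be equality since $\rk_M(A)\leq \rk_M([n])=k-r$. With these two bounds, I would verify the quotient inequality of \Cref{prop:rank-quotient} for arbitrary $A\subseteq B$ by splitting into three cases based on where $|A|,|B|$ sit relative to $k$: when both are $\leq k$ the inequality reduces to the tautology $\rk_M(B)-\rk_M(A)\leq |B|-|A|$; when $|A|\leq k\leq|B|$ it reduces to $\rk_M(A)\geq |A|-r$, which is (i); and when both are $\geq k$ it reduces to $\rk_M(A)=\rk_M(B)$, which is (ii).

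The only step that requires any care is ensuring the case $|A|\leq k\leq|B|$ is handled correctly, since it is the only case that genuinely combines both bounds and uses the specific value $k-r$ of $\rk_M([n])$. Everything else is routine, so I expect the proof to be short.
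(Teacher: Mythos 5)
Your proposal is correct and follows essentially the same route as the paper's proof: the forward direction applies \Cref{prop:rank-quotient} to $A\subseteq[n]$ exactly as the paper does, and the backward direction verifies the rank inequality by the same case split on $|A|,|B|$ relative to $k$, with your auxiliary bounds (i) and (ii) matching the paper's inline arguments (extending a small set to a $k$-set, and noting every set of size at least $k$ has rank $k-r$). No gaps; your write-up just makes the two auxiliary bounds explicit where the paper proves them in place.
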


\begin{proof}
    \textbf{The ``only if'' direction:} take arbitrary $A \subseteq [n]$ where $|A| = k$. Since $M$ has rank $k-r$, we know that $\rk_{M}(A) \leq k-r$ from the definition. Then, based on the rank inequality in \Cref{prop:rank-quotient}, we know that \[
    \rk_{M}([n]) - \rk_{M}(A) \leq \rk_{U_{k,n}}([n]) - \rk_{U_{k,n}}(A) \implies \rk_{M}(A) \geq \rk_{M}([n]),
    \] because $\rk_{U_{k,n}}([n]) = \rk_{U_{k,n}}(A) = k$. Moreover, as $\rk_{M}([n]) = k-r$, we have $\rk_{M}(A) \geq k-r$. Therefore, $\rk_{M}(A) = k-r$. 
    
    \textbf{The ``if'' direction:} consider arbitrary $B \subseteq A \subseteq [n]$. By \Cref{prop:rank-quotient}, we only need to verify the rank inequality
    \[
    \rk_{M}(A)-\rk_{M}(B)\leq \rk_{U_{k,n}}(A)-\rk_{U_{k,n}}(B).
    \]
    First, notice that 
    \[\rk_{U_{k,n}}(A) =\begin{cases}
k,&\text{if }|A| \ge k,\\
|A|, &\text{otherwise.}
\end{cases}\] When $|A|, |B| \ge k$, the rank inequality holds immediately since we have zeros on both sides. When $|A|, |B| < k$, because $B \subseteq A$, we know that \[
    \rk_{M}(A) - \rk_{M}(B) \le |A| - |B| = \rk_{U_{k,n}}(A) - \rk_{U_{k,n}}(B).
    \] Finally, consider the case $|A| \ge k$ and $|B| < k$. Then it suffices to show
    \[\rk_{M}(B) \ge |B| - r,\]
    as $\rk_{U_{k,n}}(A)=k$, $\rk_{M}(A)=k-r$, and $\rk_{M}(B)=|B|$.
    By our assumption, if we take a set $C$ such that $B\subseteq C$ and $|C|=k$, we have \[\rk_{M}(B) + k - |B| \geq \rk_{M}(C)= k-r \implies  \rk_{M}(B) \ge |B| - r, \] which gives the desired inequality. 
\end{proof}

As noted in \Cref{remark:cw le r}, our goal is to show that the criterion $\cw_{M}(A)\leq r$ for all $k$-element subsets $A$ of $[n]$ is equivalent to the quotient relation. The forward implication is the more challenging direction, which requires proving that the CW-function upper bounds on $k$-element subsets are sufficient to guarantee the quotient property. Towards that end, we first observe that the CW-function upper bounds on $k$-element subsets naturally ``lift'' to upper bounds for larger subsets, as formalized by the following lemma.

\begin{lemma}\label{lem:cwA-1}
Let $M$ be a positroid on $[n]$ without coloops. For any nonempty subset $A\subseteq [n]$, there is an element $x\in A$ such that $\cw_{M}(A\setminus\{x\})\geq \cw_{M}(A)-1$.
\end{lemma}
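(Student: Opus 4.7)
The plan is to split into two cases depending on whether $A = [n]$, since $\cw_M$ is defined via a separate formula on the full ground set.

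For $A = [n]$: I will show that any $x \in [n]$ works. The set $[n] \setminus \{x\}$ is the cyclic interval $\{\ell \in [n] : \ell \leq_{x+1} x-1\}$, so \Cref{lem:rankcyclic} gives $\cw_M([n] \setminus \{x\}) = (n-1) - \rk_M([n] \setminus \{x\})$. Because $M$ has no coloops, removing any single element does not decrease the rank, so $\rk_M([n] \setminus \{x\}) = \rk(M)$, and therefore $\cw_M([n] \setminus \{x\}) = n - 1 - \rk(M) = \cw_M([n]) - 1$.

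For $A \subsetneq [n]$: I will choose $x$ at the clockwise boundary of $A$. Concretely, pick any $y \in [n] \setminus A$ and let $x$ be the first element of $A$ encountered going clockwise from $y$; equivalently, $x \in A$ but the cyclic predecessor $x - 1$ is not in $A$. The key claim is that $C_x$ is the only CW-arrow contained in $A$ that contains $x$. Indeed, if some $C_i \subseteq A$ with $i \neq x$ contained $x$, then $x$ would lie strictly inside the arc $[i,\pi(i)]$ (so $i$ is not a loop), and the clockwise path from $i$ to $x$ would traverse $x - 1$, forcing $x - 1 \in C_i \subseteq A$ — contradicting $x - 1 \notin A$. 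Hence removing $x$ destroys at most one CW-arrow contained in $A$, yielding $\cw_M(A \setminus \{x\}) \geq \cw_M(A) - 1$ because for $A \neq [n]$ the function $\cw_M$ simply counts CW-arrows contained in its input.

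The main conceptual step is the choice of $x$ in the second case. The clockwise-boundary choice is natural once one pictures CW-arrows as clockwise arcs on the cycle: any arc reaching $x$ from a starting point $i \neq x$ must cross the barrier $x - 1 \notin A$, ruling out all but the arc $C_x$ itself. The rest of the argument is bookkeeping using only \Cref{lem:rankcyclic} and the no-coloop hypothesis.
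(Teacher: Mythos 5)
Your proof is correct, and in the main case $A\subsetneq[n]$ it is exactly the paper's argument: choose $x\in A$ whose cyclic predecessor lies outside $A$, and observe that the only CW-arrow contained in $A$ that can contain $x$ is the one starting at $x$. The only difference is the routine case $A=[n]$, where the paper directly counts the CW-arrows through a fixed point (those with $i>\pi(i)$, which number $|W_{1}(\pi^{:})|=\rk(M)$, plus $C_{1}$), while you instead apply \Cref{lem:rankcyclic} to the cyclic interval $[n]\setminus\{x\}$ and use that deleting a non-coloop preserves rank; this is an equally valid (and arguably cleaner) way to get the same equality $\cw_{M}([n]\setminus\{x\})=\cw_{M}([n])-1$, and it genuinely works for an arbitrary $x$ rather than the specific point the paper computes with.
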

\begin{proof}
If $A=[n]$, pick an arbitrary $x\in [n]$. Since $A = [n]$, by definition, $\cw_{M}(A) = n-\rk_{M}([n])= n - |W_1(\pi^:)|$, where $\pi^:$ is the decorated permutation associated with $M$. Notice that $|W_1(\pi^:)|$ equals the number of CW-arrows $C_i$ where $i > \pi(i)$, and for all such $C_i$, we have $1 \in C_i$. Thus, $A \setminus \{ 1 \}$ contains all $n$ CW-arrows except for those $|W_1(\pi^:)|$ CW-arrows and one additional CW-arrow starting at $1$. Therefore, we have \[\cw_{M}(A\setminus\{x\})=n-|W_1(\pi^:)|-1=\cw_{M}(A)-1.\] 

If $A\neq [n]$, pick an element $x\in A$ such that $(x-1 \pmod n)\not\in A$. This element exists; otherwise, $A = [n]$. There is at most one CW-arrow that is contained in $A$ and contains $x$, namely, the one starting at $x$. Therefore, $\cw_{M}(A\setminus\{x\})\geq \cw_{M}(A)-1.$
\end{proof}

\begin{cor}\label{cor:trickle_up}
Let $M$ be a positroid on $[n]$ without coloops. If $\cw_{M}(A)\leq r$ for all $k$-element subsets $A\subseteq [n]$, then $\cw_{M}(A)\leq |A|-k+r$ for all $A\subseteq [n]$ such that $|A|\geq k$.
\end{cor}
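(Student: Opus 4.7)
The plan is a straightforward induction on $|A|$, using \Cref{lem:cwA-1} as the workhorse for the inductive step. The base case $|A|=k$ is precisely the hypothesis, which states $\cw_M(A) \leq r = |A|-k+r$.

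For the inductive step, suppose the bound $\cw_M(A') \leq |A'|-k+r$ holds for all subsets $A' \subseteq [n]$ with $|A'|=s-1 \geq k$, and let $A \subseteq [n]$ with $|A|=s$. I would apply \Cref{lem:cwA-1} to produce some element $x \in A$ satisfying $\cw_M(A\setminus\{x\}) \geq \cw_M(A)-1$, which rearranges to $\cw_M(A) \leq \cw_M(A\setminus\{x\}) + 1$. Since $|A\setminus\{x\}| = s-1 \geq k$, the inductive hypothesis applies and gives $\cw_M(A\setminus\{x\}) \leq (s-1)-k+r$. Chaining these two inequalities yields $\cw_M(A) \leq s-k+r = |A|-k+r$, completing the induction.

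There is essentially no obstacle here: \Cref{lem:cwA-1} is exactly the ``monotonicity with slope at most $1$'' statement one needs to trickle the hypothesis upward one element at a time. The only mild subtlety to check is that the inductive step remains valid when $|A|=n$, but \Cref{lem:cwA-1} explicitly handles the $A=[n]$ case (producing the sharper conclusion $\cw_M(A\setminus\{x\}) = \cw_M(A)-1$), so no separate argument is required.
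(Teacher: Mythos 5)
Your proposal is correct and is essentially the paper's own argument: the paper simply applies \Cref{lem:cwA-1} iteratively, removing $|A|-k$ elements one at a time to reach a $k$-element subset, which is exactly your induction unrolled. No meaningful difference in approach or content.
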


\begin{proof}
    Take an arbitrary $A$ where $|A| \ge k$. By \Cref{lem:cwA-1}, we know that there exist elements $x_1, \dots, x_{|A|-k}$ such that $\cw_{M}(A\setminus \{x_1,\dots,x_{|A|-k}\}) \ge \cw_{M}(A) - (|A| - k)$. Combining with the assumption yields the desired inequality. 
\end{proof}

Before heading into the proof of \Cref{thm:uniformCW}, we record another basic fact about the CW-function: it provides a simple upper bound for the rank function of a positroid. This helps prove the reverse (and easier) direction that if a positroid $M$ is a quotient of $U_{k,n}$, then $\cw_M(A)\le r$ for any $k$-element subset $A\subseteq [n]$.

\begin{lemma}\label{lem:rankupperbound}
Let $M$ be a positroid on $[n]$ without coloops. Then for any proper subset $A\subsetneq [n]$, we have $\rk_{M}(A)\leq |A|-\cw_{M}(A)$.
\end{lemma}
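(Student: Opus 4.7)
The plan is to decompose $A$ into its connected components under the cyclic order on $[n]$ and combine the exact rank formula for cyclic intervals from \Cref{lem:rankcyclic} with the submodularity of $\rk_M$ given in \Cref{prop:submodular}.

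Concretely, since $A\subsetneq [n]$, I can write $A=J_{1}\sqcup\cdots\sqcup J_{s}$ where each $J_{l}$ is a maximal cyclic sub-interval of $A$. Each $J_{l}$ is a proper cyclic interval of $[n]$, so it admits the form $\{\ell:\ell\leq_{i_{l}}j_{l}\}$ for appropriate $i_{l},j_{l}\in[n]$, and hence \Cref{lem:rankcyclic} applies to give $\rk_{M}(J_{l})=|J_{l}|-\cw_{M}(J_{l})$. Iterating submodularity on the disjoint union then yields
\[
\rk_{M}(A)\;\leq\;\sum_{l=1}^{s}\rk_{M}(J_{l})\;=\;|A|-\sum_{l=1}^{s}\cw_{M}(J_{l}).
\]

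The final step is to check that $\sum_{l}\cw_{M}(J_{l})=\cw_{M}(A)$. For each CW-arrow $C_{i}\subseteq A$, note that $C_{i}\neq[n]$ since $A$ is a proper subset; hence $C_{i}$ is itself a nonempty cyclic interval of $[n]$, either a singleton (if $i$ is a loop) or $\{j:j\leq_{i}\pi(i)\}$ otherwise. As a connected cyclic subinterval of $A$, each such $C_{i}$ lies in exactly one of the maximal components $J_{l}$, which partitions $\{i:C_{i}\subseteq A\}$ according to which $J_{l}$ contains $C_{i}$. This gives the equality $\cw_{M}(A)=\sum_{l}\cw_{M}(J_{l})$ and closes the proof; the corner case $A=\emptyset$ reduces to $0\le 0$.

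I do not anticipate a major obstacle. The only delicate point is verifying that CW-arrows contained in a proper subset $A$ are themselves proper cyclic intervals, which avoids the pathology $C_{i}=[n]$ that can occur when $\pi(i)=i-1$ cyclically; once this is isolated, the component-counting argument in the final step is clean, and the assumption that $M$ has no coloops ensures that $\cw_{M}$ is defined throughout.
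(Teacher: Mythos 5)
Your proposal is correct and follows essentially the same route as the paper: decompose $A$ into cyclic intervals, apply \Cref{lem:rankcyclic} to each, and combine via submodularity (\Cref{prop:submodular}). The only difference is that you explicitly justify the identity $\cw_{M}(A)=\sum_{l}\cw_{M}(J_{l})$ by noting that a CW-arrow contained in a proper subset is itself a proper cyclic interval and hence lies in a unique maximal component, a step the paper leaves implicit.
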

\begin{proof}
If $A$ is a cyclic interval, note that $\rk_{M}(A)=|A|-\cw_{M}(A)$ from \Cref{lem:rankcyclic}. If $A$ is not, it can be written as a disjoint union of cyclic intervals $A =\bigsqcup A_i$. Then by submodularity of the rank function (\Cref{prop:submodular}), we have \[
\rk_{M}(A) = \rk_{M}\left( \bigsqcup A_i \right)\le \sum \rk_{M}(A_i) = \sum|A_i| - \sum \cw_{M}(A_i) = |A| - \cw_{M}(A).\qedhere
\]
\end{proof}

Now we are ready to prove our theorem. The ``if'' direction is the harder direction, and the main idea is to apply \Cref{lem:main} to the dual matroid of $M$.

\begin{proof}[Proof of \Cref{thm:uniformCW}]

We first prove the ``only if'' direction by contradiction. Given that $M$ is a quotient of $U_{k,n}$, it follows that $M$ has no coloops since coloops are not contained in any circuits. Assume there exist $r+1$ CW-arrows of $M$ whose union has cardinality less than $k+1$, and notice that this is equivalent to saying that there exists a $k$-element subset $A \subseteq [n]$ where $\cw_{M}(A) \ge r+1$. By \Cref{lem:rankupperbound} we have $\rk_{M}(A) \le k-r-1$, and by \Cref{lem:rank_char_of_quotient} we know that $M$ is not a quotient of $U_{k,n}$, resulting in a contradiction.

We then prove the ``if'' direction. By \Cref{remark:cw le r}, the assumption is equivalent to $\cw_{M}(A)\leq r$ for all $k$-element subset of $[n]$. We firstly argue that $M$ cannot have coloops. Suppose $j$ is a coloop of $M$. Consider the CW-arrows whose end points are in $[j+1,j+k] \setminus W_j(\pi^{:})$, where $\pi^{:}$ is the decorated permutation associated with $M$. By \Cref{prop:decpermtoneck} we know that $W_{j}(\pi^{:})$ has size $k-r$, and since $\pi^{:}(j)=\underline{j}$ we know that $j\in W_{j}(\pi^{:})$. Thus the number of CW-arrows whose end points are in $[j+1,j+k] \setminus W_j(\pi^{:})$ is at least $k-(k-r-1) = r+1$. Since all these CW-arrows are contained in $[j+1,j+k]$ due to the definition of $W_{j}(\pi^{:})$, this implies that $\cw_{M}([j+1,j+k])\geq r+1$, contradicting the assumption. 

Now fix a $k$-element subset $A\subseteq [n]$. Let $M^{*}$ be the dual of $M$. By applying the \Cref{lem:main} to $[n]\setminus A$ and the dual matroid $M^{*}$, we obtain a partition $[n]\setminus A=A_{1}\sqcup\dots\sqcup A_{t}$ such that
\begin{equation}\label{eq:apply-main}
\rk_{M^{*}}([n]\setminus A)=\sum_{j=1}^{t}\Big(\rk_{M^{*}}([n])-\ccw_{M^{*}}([n]\setminus A_{j})\Big).
\end{equation}
We thus have
\begingroup
\allowdisplaybreaks
\begin{align*}
\rk_{M}(A)&=|A|-n+\rk_{M}([n])+\rk_{M^{*}}([n]\setminus A) &(\text{by \Cref{prop:rank-dual}})\\
&=|A|-n+\rk_{M}([n])+\sum_{j=1}^{t}\Big(\rk_{M^{*}}([n])-\ccw_{M^{*}}([n]\setminus A_{j})\Big)&(\text{by \eqref{eq:apply-main}})\\
&=|A|-n+\rk_{M}([n])+\sum_{j=1}^{t}\Big(n-\rk_{M}([n])-\cw_{M}([n]\setminus A_{j})\Big)&(\text{by duality})\\
&\geq |A|-n+\rk_{M}([n])+\sum_{j=1}^{t}\Big(n-\rk(M)-\left|[n]\setminus A_{j}\right|+\rk(M)\Big)&(\text{by \Cref{cor:trickle_up}})\\
&=\rk(M).
\end{align*}
\endgroup
Since $A$ is arbitrary, an application of \Cref{lem:rank_char_of_quotient} shows that $M$ is a quotient of $U_{k,n}$.
\end{proof}
% Now assume $\mathcal{M}$ has $p$ coloops whose indices form set $P$. Obtain another matroid $\mathcal{M}'$ by deleting all these coloops from $\mathcal{M}$. The matroid $\mathcal{M'}$ is a positroid of rank $k-r-p$ with ground set $[n]\setminus P$, and since $\mathcal{M}'$ has no coloops, by the same analysis above $\mathcal{M}'$ is a quotient of $U_{k-p,n}$. Now add these $p$ coloops back. By \Cref{lem:rank_char_of_quotient} we have $\rk_\mathcal{M'}(A)\geq k-r-p$

\subsection{The case of elementary quotients}\label{subsec:elementary-quotients}

In this subsection, we interpret \Cref{thm:uniformCW} in the context of \Cref{sec:3}. Recall from \Cref{rmk:open-question} that, given a decorated permutation $\pi^{:}$ over $[n]$, we want to find a concise characterization of sets $A\subseteq [n]$ such that $\overrightarrow{\rho_{A}}(\pi^{:})\lessdot \pi^{:}$. Since \Cref{thm:uniformCW} is a necessary and sufficient condition for the quotient, it helps answer this question for the case $\pi^{:}=\pi_{k,n}$.

For every proper subset $A\subseteq [n]$, there is a unique way to decompose $A$ into a collection of cyclic intervals that are disjoint and pairwise non-adjacent, written as $A=[a_{1},b_{1}]\sqcup \dots \sqcup [a_{s},b_{s}]$ where no pair $i,j\in [s]$ satisfies $b_{j}+1\equiv a_{i}\pmod n$. We call each of these cyclic intervals a \emph{cyclic component} of $A$.

\begin{eg}
    Let $A\subseteq [9]$ be $\{1,2,4,6,7,9\}$, then the cyclic intervals of $A$ are $\{4\},[6,7]$ and $[9,2]$.
\end{eg}

Using the concept of cyclic components, we can state the main result of this subsection.

\begin{theorem}\label{thm:uniform}
    Let $1\leq k\leq n-1$, and let $\sigma^:$ be a decorated permutation on $[n]$. Then $\sigma^{:}\lessdot \pi_{k,n}$ if and only if $\sigma^: = \overrightarrow{\rho_{A}}(\pi_{k,n})$ for some $A \subseteq [n]$, where the union of any two distinct cyclic components of $A$ has cardinality at most $k-1$.
\end{theorem}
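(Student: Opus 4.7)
The plan is to combine the cyclic-shift description of elementary quotients from \Cref{thm:shift} with the CW-arrow criterion for quotients of uniform matroids from \Cref{thm:uniformCW} (specialized to $r=1$), and then translate the CW-arrow condition into the cyclic-component condition on $A$ through an explicit calculation.

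For the ``only if'' direction, if $\sigma^{:}\lessdot \pi_{k,n}$, then \Cref{thm:shift} produces an $A\subseteq[n]$ with $\sigma^{:}=\overrightarrow{\rho_A}(\pi_{k,n})$, and \Cref{thm:uniformCW} with $r=1$ gives that any two CW-arrows of $\sigma^{:}$ have union of cardinality at least $k+1$. The cyclic-component condition is then deduced by contrapositive: if two distinct cyclic components $B_s,B_t$ satisfy $|B_s\cup B_t|\geq k$, the explicit CW-arrows at the gap starts following $B_s$ and $B_t$ (computed below) give two CW-arrows whose union fits into a $k$-subset, contradicting the CW-arrow condition. For the ``if'' direction, given $\sigma^{:}=\overrightarrow{\rho_A}(\pi_{k,n})$ and the cyclic-component condition, one must check both $\rk(\sigma^{:})=k-1$ and the CW-arrow condition on $\sigma^{:}$, the latter of which (by \Cref{thm:uniformCW}) gives that $\sigma^{:}$ is a quotient of $U_{k,n}$.

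Both directions rest on an explicit description of the CW-arrows of $\overrightarrow{\rho_A}(\pi_{k,n})$. Writing $A=B_1\sqcup\cdots\sqcup B_s$ with components $B_t=[a_t,b_t]$ and gaps $G_t=[b_t+1,a_{t+1}-1]$, the map $\sigma=\overrightarrow{\rho_A}(\pi_{k,n})$ sends $i\in A$ to $i+k$, sends a gap-interior $i$ to $i+k-1$, and sends a gap-start $i=b_t+1$ to $a_t-1+k$. Correspondingly, the CW-arrow $C_i$ equals $[i,i+k]$ (length $k+1$) for $i\in A$; equals $[i,i+k-1]$ (length $k$) for $i$ in the interior of a gap; and equals $[b_t+1,a_t-1+k]$ (length $k-|B_t|$, degenerating to a singleton loop when $|B_t|=k-1$) for $i=b_t+1$ at a gap start. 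In particular, two disjoint gap-start CW-arrows $C_{b_s+1}$ and $C_{b_t+1}$ have union of size $2k-|B_s|-|B_t|$, which is $\geq k+1$ precisely when $|B_s|+|B_t|\leq k-1$, exactly matching the cyclic-component bound.

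The main obstacle will be a careful case analysis: one must handle possible overlaps between gap-start CW-arrows, and verify that interactions with the ``large'' block-internal and gap-interior CW-arrows do not create additional small unions beyond those captured by pairs of cyclic components. A secondary subtlety lies in the rank computation $\rk(\sigma^{:})=k-1$, which cannot be attributed solely to counting loops\textemdash the shift $\overrightarrow{\rho_A}$ can also remove anti-exceedances from $W_1(\sigma^{:})$ by relocating preimages under $\sigma^{-1}$\textemdash so one must track both contributions using the explicit formulas above and confirm that under the cyclic-component condition the rank drops by exactly one.
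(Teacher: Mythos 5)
Your proposal follows essentially the same route as the paper's proof: \Cref{thm:shift} supplies the set $A$, \Cref{thm:uniformCW} with $r=1$ serves as the quotient criterion, and the translation rests on the gap-start CW-arrows $[b_t+1,a_t+k-1]$ of size $k-|B_t|$ together with a disjoint-versus-overlapping case analysis, and your explicit description of $\overrightarrow{\rho_A}(\pi_{k,n})$ and its CW-arrows is correct. The only divergences are in execution rather than in the route: the paper reads off the gap-start arrow lengths from the Grassmann-interval containments (\Cref{lemma:Gr contain and shift}, \Cref{lem:vec=shift}), quotes \cite[Theorem 26]{Ben19} for $\rk(\sigma^{:})=k-1$ instead of recomputing it, and in the ``if'' direction organizes the case analysis by whether the two arrows (which must start outside $A$) overlap, which is a leaner version of the type-by-type check you defer\textemdash the verifications you flag as obstacles do go through along the lines you sketch.
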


\begin{proof}
\textbf{The ``only if'' direction:} the existence of an $A\subseteq[n]$ such that $\sigma^{:}=\overrightarrow{\rho_{A}}(\pi_{k,n})$ follows from \Cref{thm:shift}. Let $[a_{1},b_{1}]$ and $[a_{2},b_{2}]$ be any two distinct cyclic components of $A$. We show that $|[a_{1},b_{1}]|+|[a_{2},b_{2}]|\leq k-1$. Let $c_{1}=\pi_{k,n}(a_{1}-1)=(a_{1}+k-1\pmod n)$. After the cyclic shift, we would have $\sigma(b_{1}+1)=c_{1}$. By  \Cref{lemma:Gr contain and shift} and \Cref{lem:vec=shift}, we know that the cyclic interval $S^{\pi}_{c_{1}}=(a_{1}-1,c_1]=[a_{1},c_{1}]$ contains the cyclic interval $S^{\sigma}_{c_{1}}=(b_{1}+1,c_{1}]=[b_{1}+2,c_{1}]$. Therefore, the CW-arrow $[b_{1}+1,c_{1}]$ of $\sigma^{:}$ has cardinality $|[a_{1},c_{1}]|-|[a_{1},b_{1}]|=k-|[a_{1},b_{1}]|$. Similarly, we may define $c_{2}=\pi_{k,n}(a_{2}-1)$ and deduce that the CW-arrow $[b_{2}+1,c_{2}]$ of $\sigma^{:}$ has cardinality $k-|[a_{2},b_{2}]|$.  

\textbf{Case 1:} the CW-arrows $[b_{1}+1,c_{1}]$ and $[b_{2}+1,c_{2}]$ are disjoint. Then by \Cref{thm:uniformCW}, the sum of their cardinalities, $(k-|[a_{1},b_{1}]|)+(k-|[a_{2},b_{2}]|)$, is at least $k+1$, and thus $|[a_{1},b_{1}]|+|[a_{2},b_{2}]|\leq k-1$.

\textbf{Case 2:} the CW-arrows $[b_{1}+1,c_{1}]$ and $[b_{2}+1,c_{2}]$ are not disjoint. Without loss of generality, assume that $b_{2}+1\in [b_{1}+1,c_{1}]$. Then, since $b_{1}+1\not\in [a_{2},b_{2}]$, we deduce that $[a_{2},b_{2}]\subseteq [b_{1}+2,c_{1}]$, and thus $|[a_{1},b_{1}]|+|[a_{2},b_{2}]|\leq |[a_{1},b_{1}]\cup[b_{1}+2,c_{1}]|=k-1$.

\textbf{The ``if'' direction:} assume $\sigma^{:}=\overrightarrow{\rho_{A}}(\pi_{k,n})$ for some $A\subseteq [n]$, and the union of any two distinct cyclic components of $A$ has cardinality at most $k-1$. By \cite[Theorem 26]{Ben19}, we know that $\rk(\sigma^{:})=k-1$. We then show that the union of any two CW-arrows of $\sigma^{:}$ has cardinality at least $k+1$, which implies $\sigma^{:}\lessdot \pi_{k,n}$ due to \Cref{thm:uniformCW}. Let $[b_{1}+1,c_{1}]$ and $[b_{2}+1,c_{2}]$ be the two CW arrows and assume, on the contrary, that the cardinality of their union is at most $k$. Then we must have $b_{1}+1\notin A$, since otherwise $c_{1}=\sigma(b_{1}+1)=(b_{1}+1+k\pmod n)$ and $|[b_{1}+1,c_{1}]|=k+1$. Similarly, we have $b_{2}+1\notin A$. We again divide our proof into two cases.

\textbf{Case 1:} the CW-arrows $[b_{1}+1,c_{1}]$ and $[b_{2}+1,c_{2}]$ are not disjoint. Without loss of generality, assume $b_{2}+1\in [b_{1}+1,c_{1}]$. Since $b_{1}+1,b_{2}+1\notin A$ and $\sigma^{:}=\overrightarrow{\rho_{A}}(\pi_{k,n})$, there exists some $x\in [b_{1}+1,b_{2}]$ such that $\sigma(b_{2}+1)=\pi_{k,n}(x)$, that is, $c_{2}=\pi_{k,n}(x)$. Thus, $|[b_{1}+1,c_{1}]\cup[b_{2}+1,c_{2}]|\geq \left|[b_{1}+1,x]\cup[x+1,\pi_{k,n}(x)]\right|\geq k+1$.

\textbf{Case 2:} the CW-arrows $[b_{1}+1,c_{1}]$ and $[b_{2}+1,c_{2}]$ are disjoint. Observe that $(\pi^{-1}_{k,n}(c_{1}),b_{1}]$ and $(\pi_{k,n}^{-1}(c_{2}),b_{2}]$ are both cyclic components of $A$. Therefore, 
\[|[b_{1}+1,c_{1}]|+|[b_{2}+1,c_{2}]|= k-\left|(\pi_{k,n}^{-1}(c_{1}),b_{1}]\right|+k-\left|(\pi_{k,n}^{-1}(c_{2}),b_{2}]\right|\geq 2k-(k-1)=k+1.\qedhere \]
\end{proof}

\begin{eg}
 Consider $\pi_{4,8} = 56781234$. If $A=\left\{1,3,5,8\right\}$, then the size of the union of any two components in $\{3\}, \{5\},[8,1]$ does not exceed $3 = 4 - 1$. By \Cref{thm:uniform}, we have $\overrightarrow{\rho_{1358}}\left(\pi_{4,8}\right)\lessdot \pi_{4,8}$. If $A = \left\{1,2,5,8\right\}$, then the union of two components $\left\{5\right\}$ and $[8,2]$ has size $4$, violating the condition. As a result, $\overrightarrow{\rho_{1258}}(\pi_{4,8})$ is not a quotient of $\pi_{4,8}$.
\end{eg}

\begin{remark}
The paper \cite{Ben19} also focuses on characterizing elementary positroid quotients of uniform matroids. They proved that the condition $|A|\leq k-1$ is sufficient for  $\overrightarrow{\rho_{A}}(\pi_{k,n})\lessdot \pi_{k,n}$ (\cite[Theorem 28]{Ben19}), while a necessary condition is that each \emph{individual} cyclic component of $A$ has cardinality at most $k-1$ (\cite[Theorem 26]{Ben19}). In comparison, our results show that the precise necessary and sufficient condition lies between these two: specifically, the union of any two cyclic components must have cardinality at most $k-1$.
\end{remark}
\section{Flag LPMs and necklace containment}\label{sec:5}

The main objective of this section is to examine the ``non-locality'' phenomenon in \Cref{thm:uniformCW}, as highlighted in \Cref{rmk:nonlocality}. To illustrate this concept, we introduce and prove \Cref{thm:LPM}, which serves as a contrasting example.

\subsection{Locality of necklace containment}

Recall that in the proof of \Cref{thm:shift}, we crucially utilized the observation that a flag positroid satisfies the ``necklace containment relation''. That is, for any flag positroid $(M,N)$, if the Grassmann necklace of $M$ and $N$ are $I^{\sigma}$ and $I^{\pi}$, respectively, we have $I^{\sigma}_{i}\subseteq I^{\pi}_{i}$ for all $i\in[n]$ (due to \Cref{prop:quotient-implies-containment} and \Cref{prop:necklace-minimizes}). By \Cref{def:conecklace}, we know that the Grassmann conecklaces $J^{\sigma}$ and $J^{\pi}$ of $M$ and $N$ also satisfy $J^{\sigma}_{i}\subseteq J^{\pi}_{i}$ for all $i\in [n]$. We record this observation as follows.

\begin{prop}\label{prop:Ne CoNe Cont}
    Given a flag positroid $(M,N)$, let $I^{\sigma},I^{\pi}$ be the Grassmann necklaces of $M$ and $N$, and let $J^{\sigma},J^{\pi}$ be the Grassmann conecklaces of $M$ and $N$, respectively. Then $I^{\sigma}_{i}\subseteq I^{\pi}_{i}$ and $J^{\sigma}_{i}\subseteq J^{\pi}_{i}$ for all $i\in [n]$.
\end{prop}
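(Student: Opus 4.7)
The plan is to obtain both containments as direct applications of Proposition \ref{prop:quotient-implies-containment}, applied to two different total orders on the ground set $[n]$. The statement is essentially a bookkeeping observation, and in fact the argument for the necklace part has already been sketched in the paragraph preceding the proposition; I will just make that argument precise and then repeat it for the conecklace after passing to the reverse order.

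For the Grassmann necklace containment, I would invoke Proposition \ref{prop:necklace-minimizes}, which identifies $I^\sigma_i = \min_{\leq_i}\mathcal{B}(M)$ and $I^\pi_i = \min_{\leq_i}\mathcal{B}(N)$. Since $(M,N)$ is a flag positroid, $M$ is a quotient of $N$, so Proposition \ref{prop:quotient-implies-containment} applied with $<_w \,:=\, <_i$ gives $I^\sigma_i \subseteq I^\pi_i$ immediately.

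For the Grassmann conecklace containment, the key observation is that Proposition \ref{prop:quotient-implies-containment} is stated for an \emph{arbitrary} total order $<_w$ on the ground set, not only for the cyclic orders $<_i$. Let $<_i^{\mathrm{op}}$ denote the linear order obtained by reversing $<_i$. A short verification from the coordinatewise definition of Gale order shows that $A \leq_{G,i^{\mathrm{op}}} B$ if and only if $B \leq_{G,i} A$, essentially because sorting elements under $<_i^{\mathrm{op}}$ is the same as sorting under $<_i$ and reading the result in reverse. Consequently, maxima under $\leq_{G,i}$ are exactly minima under $\leq_{G,i^{\mathrm{op}}}$. By Definition \ref{def:conecklace} this gives $J^\sigma_i = \min_{\leq_{G,i^{\mathrm{op}}}}\mathcal{B}(M)$ and $J^\pi_i = \min_{\leq_{G,i^{\mathrm{op}}}}\mathcal{B}(N)$, and a second application of Proposition \ref{prop:quotient-implies-containment}, now with $<_w \,:=\, <_i^{\mathrm{op}}$, yields $J^\sigma_i \subseteq J^\pi_i$.

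There is no real obstacle to the proof; the only thing that requires any care is the order-reversal property of Gale orders used above, but this is a direct unwinding of definitions. Alternative routes, such as appealing to matroid duality (using that $(N^*, M^*)$ is also a flag positroid and relating conecklaces of $M,N$ to necklaces of their duals), would work in principle but are strictly more cumbersome than the reverse-order reduction outlined here.
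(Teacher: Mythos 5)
Your proof is correct and takes essentially the same approach as the paper: the paper's (terse) justification derives the necklace containment from \Cref{prop:quotient-implies-containment} together with \Cref{prop:necklace-minimizes}, and obtains the conecklace containment directly from \Cref{def:conecklace}, i.e.\ from the fact that maxima in the Gale order $\leq_{G,i}$ are minima for the reversed order, to which \Cref{prop:quotient-implies-containment} again applies. Your explicit check of the order-reversal property of Gale orders just spells out the step the paper leaves implicit.
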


The perspective developed in \Cref{sec:3} also reveals that necklace containment relations can be translated into containment relations of Grassmann intervals (which are essentially equivalent to CW-arrows). We formalize this useful observation in the following proposition. The first item is merely a restatement of \Cref{cor:neckcontain=vecdom}, while the second follows easily from duality (see also the beginning of the proof of \Cref{A}).

\begin{prop}\label{prop:neck-coneck-containment}
Given decorated permutations $\sigma^{:}$ and $\pi^{:}$ over $[n]$, let $S^{\sigma}_{1},\dots,S^{\sigma}_{n}$ and $S^{\pi}_{1},\dots,S^{\pi}_{n}$ be their Grassmann intervals, as defined in \Cref{def:grassmann-matrix}. Then
\begin{enumerate}[label=(\arabic*)]
\item $I^{\sigma}_{i}\subseteq I^{\pi}_{i}$ for all $i\in [n]$ if and only if $S^{\sigma}_{i}\subseteq S^{\pi}_{i}$ for all $i\in [n]$.

\item $J^{\sigma}_{i}\subseteq J^{\pi}_{i}$ for all $i\in [n]$ if and only if $S^{\sigma}_{\sigma(i)}\subseteq S^{\pi}_{\pi(i)}$ for all $i\in [n]$.
\end{enumerate}
\end{prop}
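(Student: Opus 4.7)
The plan is to handle the two items separately, both as translations through the Grassmann matrix formalism.

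Item (1) is already the content of \Cref{cor:neckcontain=vecdom}, stated in the same language. No further argument is required; we simply invoke that corollary.

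For item (2), I would derive a companion identity for the conecklace. By \Cref{prop:coneck deco}, $J^{\pi}_{i} = \pi^{-1}(I^{\pi}_{i})$. \Cref{lemma:grassmann matrix} identifies the necklace entry $I^{\pi}_{i}$ with the $i$-th column of the Grassmann matrix, that is, $I^{\pi}_{i} = \{b \in [n] : i \in S^{\pi}_{b}\}$. Substituting $a = \pi^{-1}(b)$ gives
\[
J^{\pi}_{i} = \{a \in [n] : i \in S^{\pi}_{\pi(a)}\},
\]
and analogously $J^{\sigma}_{i} = \{a \in [n] : i \in S^{\sigma}_{\sigma(a)}\}$. (This is exactly the identity derived at the start of the proof of \Cref{A}.) Requiring $J^{\sigma}_{i} \subseteq J^{\pi}_{i}$ to hold for every $i$ is therefore equivalent to the implication ``$i \in S^{\sigma}_{\sigma(a)} \Rightarrow i \in S^{\pi}_{\pi(a)}$'' holding for every pair $(i,a)$, which in turn is equivalent to $S^{\sigma}_{\sigma(a)} \subseteq S^{\pi}_{\pi(a)}$ for every $a \in [n]$. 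This recovers the claimed characterization in both directions at once.

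I do not anticipate a serious obstacle; the whole argument is bookkeeping through the Grassmann matrix, with the containment translating pointwise once the indexing is properly rewritten. The only minor point requiring care is the exceptional convention $S^{\pi}_{i} = [n]$ when $\pi^{:}(i) = \underline{i}$ (and the analogous behavior at fixed loops, which contribute an empty row): one must check that these exceptional assignments are compatible with the column-indicator description of $I^{\pi}$, which is precisely how \Cref{def:grassmann-matrix} has been arranged, so the translation goes through unchanged.
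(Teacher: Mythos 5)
Your proposal is correct and matches the paper's own treatment: item (1) is exactly \Cref{cor:neckcontain=vecdom}, and item (2) is obtained from the identity $J^{\pi}_{i}=\{a\in[n]\mid i\in S^{\pi}_{\pi(a)}\}$ derived at the start of the proof of \Cref{A}, with the containment for all $i$ translating by quantifier exchange into $S^{\sigma}_{\sigma(a)}\subseteq S^{\pi}_{\pi(a)}$ for all $a$. Your handling of the coloop/loop conventions is also consistent with how \Cref{def:grassmann-matrix} and \Cref{lemma:grassmann matrix} are set up, so no gap remains.
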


We claim that both necklace containment $I^{\sigma}_{i}\subseteq I^{\pi}_{i}$ and $J^{\sigma}_{i}\subseteq J^{\pi}_{i}$ can be viewed as ``local'' criteria. As in \Cref{rmk:nonlocality}, consider the case where all CW-arrows of $\sigma^{:}$ and $\pi^{:}$ are of length much smaller than $n$. In other words, each $S^{\sigma}_{i}$ or $S^{\pi}_{i}$ covers only a few consecutive positions on the circle of elements in $[n]$. However, \Cref{thm:uniformCW} reveals that even when all CW-arrows are short, characterizing quotient relations requires accounting for multiple CW-arrows that may be arbitrarily far apart. Consequently, it is natural that ``local'' criteria, such as necklace and conecklace containment relations, fail to characterize quotient relations among positroids. We illustrate this with a concrete example.

\begin{eg}\label{cex:Oh conj}
Let $M$ be a positroid given by the decorated permutation $261534$ and take $N$ to be the uniform positroid $U_{4,6}$. The necklace and conecklace of $M$ are $(134,234,346,461,\allowbreak 561,613)$ and $(356,561,562,623,234,235)$ respectively. For $N$, they are $(1234,2345,3456,4561,\allowbreak 5612,6123)$ and $(3456,4561,5612,6123,1234,2345)$. They satisfy the containment conditions. However, $\rk_{M}([6])-\rk_{M}(1245)=1$ while $\rk_{N}([6])-\rk_{N}(1245)=0$. This means $M$ is \emph{not} a quotient of $N$, since the rank condition of quotient in \Cref{prop:rank-quotient} is violated. Therefore, the converse of \Cref{prop:Ne CoNe Cont} is not true. 
\end{eg}

\subsection{A conjecture of Oh and Xiang}

It was conjectured by Oh and Xiang \cite{Oh17} that flag positroids can be characterized by certain ``covering'' relations of CCW-arrows (\Cref{def:CCW-arrows}). 

\begin{conj}[{\cite[Conjecture 6.3]{Oh17}}]\label{conj:Oh-Xiang}
Let $M$ and $N$ be positroids over $[n]$ without loops or coloops. If every CCW-arrow of $M$ is the union of some CCW-arrows of $N$, then $M$ is a quotient of $N$. 
\end{conj}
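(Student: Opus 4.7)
The plan is to verify the rank-function characterization of quotients in \Cref{prop:rank-quotient} by using the McAlmon-Oh rank formula (\Cref{lem:main}) as the bridge between CCW-arrows and ranks. Since $M$ and $N$ have neither loops nor coloops, both \Cref{lem:main} and duality (\Cref{prop:rank-dual}) apply directly, and submodularity of the rank function lets me reduce the desired inequality $\rk_M(B)-\rk_M(A)\le \rk_N(B)-\rk_N(A)$ to the elementary case $B=A\cup\{x\}$ with $x\notin A$, i.e.\ showing that whenever $x$ increases the rank in $M$, it also increases the rank in $N$.

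The key intermediate step is to translate the CCW-arrow hypothesis into a pointwise comparison of the CCW-functions. If $CC_i^M=CC_{i_1}^N\cup\dots\cup CC_{i_s}^N$, then for any $T\subseteq[n]$ we have $CC_i^M\subseteq T$ iff every $CC_{i_k}^N\subseteq T$. Summing over $i$ and carefully accounting for the case when the same CCW-arrow of $N$ appears in the decompositions of several CCW-arrows of $M$, I would aim for an estimate of the form $\ccw_M(T)\le \ccw_N(T)$ for every proper $T\subsetneq[n]$. Applying \Cref{lem:main} to $M$ at the sets $A$ and $A\cup\{x\}$ would then yield upper bounds on $\rk_M(A)$ and $\rk_M(A\cup\{x\})$ in terms of $\ccw_N$ evaluated on the corresponding complements, which one would try to match against McAlmon-Oh evaluations for $N$ to close the elementary rank inequality.

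The main obstacle I anticipate is that the formula in \Cref{lem:main} is an equality only for a specific, matroid-dependent partition. Since the partitions that are optimal for $M$ and $N$ at the same subset need not coincide, the CCW-function inequality does not cleanly propagate through the formula, and one faces a one-sided slack in each application. A secondary difficulty is that overlaps in the union decompositions can ruin the hoped-for pointwise $\ccw_M\le \ccw_N$ estimate, since a single CCW-arrow of $N$ might be ``used'' multiple times on the $M$-side. My first move would therefore be to test the proposed CCW-function comparison on small positroids on $[6]$, and to investigate whether the optimal partitions for $M$ and $N$ can be aligned through a monotonicity property of \Cref{lem:main}; aligning these partitions (or finding a partition that works simultaneously for both matroids) seems to be the genuine crux on which the whole approach either succeeds or breaks down.
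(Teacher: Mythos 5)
There is a fundamental problem: the statement you are trying to prove is false, and the paper's treatment of it is a \emph{refutation}, not a proof. This is \cite[Conjecture 6.3]{Oh17}, and the paper disproves it via the counterexample of \Cref{cex:Oh conj}: take $M$ to be the positroid on $[6]$ with decorated permutation $261534$ and $N=U_{4,6}$. Every CCW-arrow of $M$ is a cyclic interval of length at least $3$, while the CCW-arrows of $U_{4,6}$ are exactly the cyclic intervals of length $3$, so every CCW-arrow of $M$ is a union of CCW-arrows of $N$ and the hypothesis of the conjecture holds. Nevertheless $\rk_{M}([6])-\rk_{M}(\{1,2,4,5\})=1>0=\rk_{N}([6])-\rk_{N}(\{1,2,4,5\})$, which violates the rank characterization of quotients (\Cref{prop:rank-quotient}); hence $M$ is not a quotient of $N$. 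No strategy, including yours, can close the argument.

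It is worth noting that the obstacle you flagged as the ``genuine crux'' is precisely where the falsity manifests. In this counterexample the pointwise comparison $\ccw_{M}(T)\leq\ccw_{N}(T)$ that you hoped for actually holds (for instance $\ccw_{M}(\{3,4,5,6\})=2=\ccw_{N}(\{3,4,5,6\})$, and $\ccw_{M}([6])=3<4=\ccw_{N}([6])$), yet the conclusion still fails: the partition realizing equality in \Cref{lem:main} for $M$ at $A=\{1,2,4,5\}$ is $\{1,2\}\sqcup\{4,5\}$, giving $\rk_{M}(A)=(3-2)+(3-2)=2$, while for $N$ the trivial partition gives $\rk_{N}(A)=4$. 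The matroid-dependent partitions cannot be aligned, and this is not a technical gap to be engineered around but the reason the conjecture is false. Your stated first move of testing small positroids on $[6]$ was exactly the right instinct; carried out, it would have produced this counterexample rather than a proof.
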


It is not hard to see that \Cref{cex:Oh conj} disproves this conjecture. In fact, if $M$ is the positroid given by the decorated permutation $261534$, all CCW-arrows of $M$ have length at least 3. Since every cyclic interval of length 3 is a CCW-arrow of $U_{4,6}$, it follows that every CCW-arrow of $M$ is the union of some CCW-arrows of $U_{4,6}$. However, $M$ is not a quotient of $U_{4,6}$, as argued in \Cref{cex:Oh conj}.

\subsection{The case of LPMs}

Lattice path matroids (LPMs) are a special class of positroids that defined solely by the first term of its Grassmann necklace and conecklace (see \Cref{def:LPM}). By \Cref{prop:Ne CoNe Cont}, we know that if the LPM $M[U',L']$ is a quotient of $M[U,L]$, then $U'\subseteq U$ and $L'\subseteq L$. The work of Benedetti and Knauer \cite{Ben22} provides a necessary and sufficient criterion on $U',L',U$ and $L$ for which $M[U',L']$ is a quotient of $M[U,L]$. 

\begin{theorem}[{\cite[Theorem 19]{Ben22}}]
\label{thm:LPM-greedy-pairing}
Consider an LPM $M[U,L]$ over $[n]$. Order the elements of $U$ and $L$ as $u_{1}<\dots<u_{k}$ and $\ell_{1}<\dots<\ell_{k}$, respectively. Then an LPM $M[U',L']$ is a quotient of $M[U,L]$ if and only if $U'\subseteq U$, $L'\subseteq L$, and the elements of $U\setminus U'$ and $L\setminus L'$ can be ordered as $u_{i_{1}}<\dots<u_{i_{z}}$ and $\ell_{j_{1}}<\dots<\ell_{j_{z}}$, respectively, such that for each $s\in [z]$ one has $j_{s}\leq i_{s}$ and $u_{i_{s}}-\ell_{j_{s}}\leq i_{s}-j_{s}$.
\end{theorem}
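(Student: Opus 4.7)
The plan is to combine the rank-function characterization of matroid quotients (\Cref{prop:rank-quotient}) with the transversal-matroid presentation of LPMs, then perform an inductive reduction to elementary quotients. Recall that $M[U,L]$ is the transversal matroid of the interval family $(A_1,\dots,A_k)$ with $A_s = [u_s,\ell_s]$, so by Hall's theorem the rank of any $S\subseteq [n]$ equals the maximum matching between $S$ and $[k]$ in the bipartite graph with edges $(s,i)$ whenever $s\in A_i$. Because both sequences $(u_s)$ and $(\ell_s)$ are strictly increasing, this bipartite graph has a staircase structure, and the rank admits a tractable closed form that I would exploit throughout.

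For the necessary direction, note first that $U$ is the $\leq_1$-Gale minimum basis of $M[U,L]$ and $L$ is its $\leq_1$-Gale maximum basis. Applying \Cref{prop:quotient-implies-containment} with the order $<_1$ yields $U'\subseteq U$, and applying it with the reverse order on $[n]$ (under which $L$ becomes the unique Gale minimum basis) yields $L'\subseteq L$. Writing $U\setminus U' = \{u_{i_1}<\cdots<u_{i_z}\}$ and $L\setminus L' = \{\ell_{j_1}<\cdots<\ell_{j_z}\}$, I would derive the inequalities $j_s\leq i_s$ and $u_{i_s}-\ell_{j_s}\leq i_s-j_s$ by applying the rank inequality of \Cref{prop:rank-quotient} to carefully chosen pairs $A\subseteq B$, in particular prefix segments $[1,t]$ and suffix segments $[t,n]$. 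Evaluating both sides via the Hall formula yields numerical constraints on how many elements $U$ and $L$ can lose in each such range, and a combinatorial argument shows these constraints are equivalent to the desired pointwise inequalities on the sorted pairing.

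For the sufficient direction, I would induct on $z$. For the base case $z=1$, assume $U'=U\setminus\{u_i\}$ and $L'=L\setminus\{\ell_j\}$ with $j\leq i$ and $u_i-\ell_j\leq i-j$. Passing from $M[U,L]$ to $M[U',L']$ corresponds to deleting the interval $A_i$ from the transversal family while simultaneously enlarging each of $A_j,\dots,A_{i-1}$ to absorb one extra endpoint on the right. I would verify \Cref{prop:rank-quotient} by comparing maximum Hall matchings in the two matroids, showing that these enlargements exactly compensate for the deletion whenever the slack condition holds. For the inductive step, peel off the pair of largest index $(u_{i_z},\ell_{j_z})$: the intermediate LPM $M[U'',L'']$ with $U''=U\setminus\{u_{i_z}\}$ and $L''=L\setminus\{\ell_{j_z}\}$ is an elementary quotient of $M[U,L]$ by the base case. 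Moreover, since $i_s<i_z$ and $j_s<j_z$ for all $s<z$, the indices of the remaining elements $u_{i_s}$ and $\ell_{j_s}$ within $U''$ and $L''$ are unchanged, so the pairing hypotheses of the theorem persist relative to $M[U'',L'']$. Induction together with transitivity of the quotient relation then completes the argument.

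The main obstacle is the base case of the induction: verifying that the slack condition $u_i-\ell_j\leq i-j$ is \emph{precisely} what is needed to guarantee the rank inequality for every pair $A\subseteq B\subseteq [n]$, neither more nor less. This demands a delicate case analysis of how the maximum Hall matchings in the two LPMs interact with the range $[\ell_j,u_i]$, together with bookkeeping of which Hall-deficient subsets in $M[U,L]$ transfer to $M[U',L']$ without worsening by more than one unit. A secondary obstacle lies in the necessary direction: the rank inequalities of \Cref{prop:rank-quotient} form a large family, and one must identify a minimal subfamily whose translation into Hall-type constraints precisely recovers the sorted pairwise inequalities.
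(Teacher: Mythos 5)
First, a framing note: the paper does not prove this statement at all\textemdash it is imported verbatim from \cite[Theorem 19]{Ben22} and used as a black box in the proof of \Cref{thm:LPM-formal}\textemdash so your attempt can only be judged on its own terms. Your scaffolding is sensible and parts of it are correct: the transversal presentation of $M[U,L]$ by the intervals $[u_s,\ell_s]$, the derivation of $U'\subseteq U$ and $L'\subseteq L$ from \Cref{prop:quotient-implies-containment} applied to $<_1$ and to the reversed order, and the inductive step (peeling the largest pair $(u_{i_z},\ell_{j_z})$, noting that the indices of the earlier removed elements are unchanged in $M[U'',L'']$, that $M[U'',L'']$ is again a valid LPM, and invoking transitivity of quotients) all check out.

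However, both directions have genuine gaps where the mathematical content lives. For necessity, the only test sets you name\textemdash prefixes $[1,t]$ and suffixes $[t,n]$\textemdash provably cannot produce the inequalities $j_s\leq i_s$ and $u_{i_s}-\ell_{j_s}\leq i_s-j_s$: since $\rk_{M[U,L]}([1,t])=|U\cap[1,t]|$ and $\rk_{M[U,L]}([t,n])=|L\cap[t,n]|$, every instance of \Cref{prop:rank-quotient} in which $A\subseteq B$ are prefixes, suffixes, $\emptyset$, or $[n]$ reduces to $|U'\cap S|\leq|U\cap S|$, $|L'\cap S|\leq|L\cap S|$, or ``at most $z$ elements are removed,'' all automatic once $U'\subseteq U$ and $L'\subseteq L$. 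Concretely, $U=\{1,4\}$, $L=\{2,5\}$, $U'=\{1\}$, $L'=\{5\}$ over $[5]$ passes every prefix/suffix test, yet violates the pairing condition ($u_2-\ell_1=2>1$) and is not a quotient (the loop $3$ of $M[U,L]$ is independent in $M[U',L']$); detecting this requires interior sets such as $\{3\}$, i.e.\ cyclic intervals or complements $[1,a]\cup[b,n]$, and the ``combinatorial argument'' converting such constraints into the sorted pairing inequalities is only asserted. For sufficiency, the base case $z=1$\textemdash showing that $j\leq i$ and $u_i-\ell_j\leq i-j$ force the rank inequality for \emph{every} pair $A\subseteq B$\textemdash is exactly the elementary case of the theorem; your description of the new presentation (delete $[u_i,\ell_i]$, enlarge $[u_s,\ell_s]$ to $[u_s,\ell_{s+1}]$ for $j\leq s<i$) is accurate, but the slack condition is never actually used, and you yourself flag this step as the main obstacle. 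So the proposal is a plausible plan with correct reductions, but the two steps carrying the proof's substance are missing.
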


The paper \cite{Ben22} also shows using this criterion that \Cref{conj:Oh-Xiang} holds for LPMs. That is, unlike the case of general positroids, if $M$ and $N$ are LPMs over $[n]$, the condition that every CCW-arrow of $M$ is the union of some CCW-arrows of $N$ suffices to ensure the quotient relation. We next show that for LPMs, in fact, even the weaker condition of necklace and conecklace containment suffices to ensure the quotient relation, that is, the converse of \Cref{prop:Ne CoNe Cont} is true in the case of LPMs.

\begin{theorem}[Formal statement of \Cref{thm:LPM}]\label{thm:LPM-formal}
Given a lattice path matroids $N'$ and $N$ over $[n]$. let $I',I$ be the Grassmann necklaces of $N'$ and $N$, and let $J',J$ be the Grassmann conecklaces of $N'$ and $N$, respectively. Then $N'$ is a quotient of $N$ if and only if $I'_{i}\subseteq I_{i}$ and $J'_{i}\subseteq J_{i}$ for all $i\in [n]$.
\end{theorem}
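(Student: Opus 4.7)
The plan is to apply the Benedetti--Knauer pairing criterion (\Cref{thm:LPM-greedy-pairing}), which characterizes flag LPMs exactly. The ``only if'' direction of our theorem is immediate from \Cref{prop:Ne CoNe Cont}. For the ``if'' direction, assume $I'_i \subseteq I_i$ and $J'_i \subseteq J_i$ for all $i \in [n]$. Reading off $i = 1$ yields $U' = I'_1 \subseteq I_1 = U$ and $L' = J'_1 \subseteq J_1 = L$. Writing $U \setminus U' = \{u_{i_1} < \cdots < u_{i_z}\}$ and $L \setminus L' = \{\ell_{j_1} < \cdots < \ell_{j_z}\}$, the residual task is to verify, for each $s$, the inequalities $j_s \le i_s$ and $u_{i_s} - \ell_{j_s} \le i_s - j_s$.

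The key technical ingredient is an explicit greedy description of $I_i$ and $J_i$ for an LPM. Concretely, for $M[U,L]$ and $i \in [n]$, setting $s_i^* = |\{s : \ell_s < i\}|$, the $\le_i$-minimum basis $I_i = \{b_1 < \cdots < b_k\}$ is given by $b_s = \max(u_s, b_{s-1}+1)$ for $1 \le s \le s_i^*$ (with $b_0 = 0$), then $b_{s_i^*+1} = \max(u_{s_i^*+1}, i)$, and finally $b_s = \max(u_s, b_{s-1}+1)$ for $s > s_i^*+1$. This can be proved by combining the bipartite-matching characterization of LPM independence with the greedy algorithm implicit in \Cref{prop:necklace-minimizes}: the $\le_i$-minimum basis greedily fills the high positions $s > s_i^*$ starting from $i$, then packs the low positions $s \le s_i^*$ leftward from $u_1$. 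A parallel formula, built from $L$ and $t_i^* = |\{s : u_s < i\}|$ with $\min$ and decrement in place of $\max$ and increment, governs $J_i$.

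With these formulas, both pairing inequalities are verified by contrapositive. If $j_s > i_s$, the critical cyclic shift is $i = \ell_{j_s}$: the greedy formula forces $b_{j_s}(N) = \ell_{j_s}$, yet in $N'$ the removal of slot $j_s$ from $L$ together with the imbalance $j_s > i_s$ compels the greedy to place an element below $i$ that does not appear in $I_{\ell_{j_s}}(N)$, yielding a witness in $I'_{\ell_{j_s}} \setminus I_{\ell_{j_s}}$ that contradicts necklace containment. A parallel argument on the conecklace, using the cyclic shift $i = u_{i_s}$, handles a violation $u_{i_s} - \ell_{j_s} > i_s - j_s$. Since LPMs are closed under matroid duality, which exchanges necklace containment with conecklace containment and swaps the roles of the two pairing inequalities, one of the two cases can alternatively be deduced from the other by duality. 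The hardest step will be pinpointing the distinguishing element at the critical shift, requiring careful bookkeeping on how the threshold $s_i^*$ (or $t_i^*$) interacts with the removal pattern $(u_{i_r})$ and $(\ell_{j_r})$.
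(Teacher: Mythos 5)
Your proposal is correct in outline and takes a genuinely different route from the paper, though as written it stops short of a complete proof at exactly the step you label ``the hardest.'' Both arguments get the ``only if'' direction from \Cref{prop:Ne CoNe Cont} and reduce the ``if'' direction to the Benedetti--Knauer pairing criterion (\Cref{thm:LPM-greedy-pairing}); but the paper then passes to decorated permutations and contradicts the Grassmann-interval containment $S^{\sigma}_{\sigma(x)}\subseteq S^{\pi}_{\pi(x)}$ of \Cref{prop:neck-coneck-containment} at a well-chosen $x$ (so it uses conecklace containment at general indices for both violated inequalities and the necklace only at $i=1$, and it needs the minimality of the violated index $s$), whereas you work with an explicit greedy description of the LPM necklace and conecklace and exhibit a violated entry directly. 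Your greedy formula is correct (for $s_i^{*}=\#\{t:\ell_t<i\}$, the elements of $I_i$ below $i$ are exactly $u_1,\dots,u_{s_i^{*}}$, since the staircase $\max(u_t,b_{t-1}+1)$ collapses to $u_t$, and the remaining slots are filled minimally starting from $\max(u_{s_i^{*}+1},i)$), but you still owe its proof. The deferred bookkeeping does in fact close, and without needing minimality of $s$: in your Case 1 ($j_s>i_s$), take $i=\ell_{j_s}$; exactly $s-1$ of $\ell_1,\dots,\ell_{j_s-1}$ are removed while at least $s$ of $u_1,\dots,u_{j_s-1}$ are removed (because $i_1<\dots<i_s\le i_s<j_s$), so the part of $I'_i$ below $i$ is $\{u'_1,\dots,u'_{j_s-s}\}$ with $u'_{j_s-s}\ge u_{j_s}>u_{j_s-1}$, while the part of $I_i$ below $i$ is exactly $\{u_1,\dots,u_{j_s-1}\}$; since $u'_{j_s-s}\le \ell'_{j_s-s}\le \ell_{j_s-1}<i$, the element $u'_{j_s-s}$ witnesses $I'_i\not\subseteq I_i$, and Case 2 is the dual statement for the conecklace at $i=u_{i_s}$ (your duality remark $M[U,L]^{*}=M[[n]\setminus L,[n]\setminus U]$ is the right way to get it for free). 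So the plan is sound and, once the greedy formula and this counting step are written out, yields a complete proof; one structural difference worth noting is that your argument uses necklace containment at general $i$ for one inequality and conecklace containment at general $i$ for the other, while the paper's version (see its closing remark) economizes by using the necklace only at $i=1$ — both are legitimate, since the hypothesis grants both containments in full.
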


To prove \Cref{thm:LPM-formal}, we first apply \Cref{prop:neck-coneck-containment}. After that, the $\text{(iii)}\Rightarrow\text{(i)}$ direction of \cite[Proof of Theorem 41]{Ben22} essentially fills in the rest of the proof. For the sake of completeness, we provide a sketch of the proof (largely similar to \cite[Proof of Theorem 41]{Ben22}) here.

\begin{proof}[Proof of \Cref{thm:LPM-formal}]
The ``only if'' direction follows from \Cref{prop:Ne CoNe Cont}, so we focus on the ``if'' direction. Let $N'=M[U',L']$ and $N=M[U,L]$. By assumption we have $U'=I'_{1}\subseteq I_{1}=U$ and $L'=J'_{1}\subseteq J_{1}=L$. Order the elements of $U\setminus U'$ and $L\setminus L'$ as $u_{i_{1}}<\dots<u_{i_{z}}$ and $\ell_{j_{1}}<\dots<\ell_{j_{z}}$, respectively. Assume on the contrary that $M[U',L']$ is not a quotient of $M[U,L]$. Then by \Cref{thm:LPM-greedy-pairing}, for some $s\in [z]$ we have either $j_{s}>i_{s}$ or $u_{i_{s}}-\ell_{j_{s}}>i_{s}-j_{s}$. We consider the smallest $s$ such that this is the case.

\textbf{Case 1:} $j_{s}>i_{s}$. Let $\sigma^{:}$ and $\pi^:$ be the decorated permutation of $N'$ and $N$, respectively. It is easy to see that $\pi(\ell_{i_{s}})=u_{i_{s}}$. Let $x=\ell_{i_{s}}$. By the minimality of the choice of $s$, we must have $x\in L'$. Since $j_{s}>i_{s}$, the number of elements in $L'$ that are at most $x$ is at least $i_{s}-s+1$, and thus the number of elements in $U'$ that are at most $\sigma(x)$ is at least $i_{s}-s+1$. Therefore $\sigma(x)>u_{i_{s}}$, and thus we have $\pi(x)=u_{i_s}<\sigma(x)\leq x$. But by \Cref{prop:neck-coneck-containment}, we have $S^{\sigma}_{\sigma(x)}\subseteq S^{\pi}_{\pi(x)}$, leading to a contradiction (note that since $x\in L'$, if $\sigma(x)=x$ then $\sigma^{:}(x)=\underline{x}$ and $S^{\sigma}_{\sigma(x)}=[n]$).

\textbf{Case 2:} $u_{i_{s}}-\ell_{j_{s}}>i_{s}-j_{s}$. Let $\sigma^{:}$ and $\pi^:$ be the decorated permutation of $N'$ and $N$, respectively. Let $y$ be the smallest element of $[n]\setminus L$ greater than $\ell_{j_{s}}$, or equivalently, the $(\ell_{j_{s}}-j_{s}+1)$-th smallest element of $[n]\setminus L$. Such a $y$ exists because 
$$\ell_{j_{s}}-j_{s}\leq u_{i_{s}}-i_{s}-1\leq  n-|U|-1=n-|L|-1.$$
Now $\pi(y)$ is the $(\ell_{j_{s}}-j_{s}+1)$-th smallest element of $[n]\setminus U$. Since $u_{i_{s}}-i_{s}>\ell_{j_{s}}-j_{s}$, it follows that $u_{i_{s}}>\pi(y)$. This means the number of elements in $[n]\setminus U'$ that are at most $\pi(y)$ is at most $(\ell_{j_{s}}-j_{s}+1)+(s-1)$, while the number of elements in $[n]\setminus L'$ that are at most $y$ is at least $(\ell_{j_{s}}-j_{s}+1)+s$. Therefore, we have $\sigma(y)>\pi(y)\geq y$. But by \Cref{prop:neck-coneck-containment}, we have $S^{\sigma}_{\sigma(y)}\subseteq S^{\pi}_{\pi(y)}$, leading to a contradiction (note that since $y\not\in L$, if $\pi(y)=y$ then $\pi^{:}(y)=\overline{y}$ and $S^{\pi}_{\pi(y)}=\emptyset$).
\end{proof}

\Cref{thm:LPM-formal} (and \cite[Proof of Theorem 41]{Ben22}) stands in contrast to \Cref{cex:Oh conj}, demonstrating that in some sense, it is much easier to ensure quotient relations between LPMs than between general positroids. 

\begin{remark}
Note that in the proof of \Cref{thm:LPM-formal}, while the conecklace containment $J'_{i}\subseteq J_{i}$ is used for all $i\in [n]$, we used the necklace containment $I'_{i}\subseteq I_{i}$ only at $i=1$. By symmetry, we could have also used the necklace containment fully and the conecklace containment only at $i=1$. It is certainly impossible to not use the conecklace containment at all. For example, consider $M[14, 57]$ and $M[145, 467]$. In this case, we have 
\[
    I' =(14,24,34,45,56,16,17) \quad \text{and} \quad I=(145,245,345,456,156,167,147).
\]
So we have $I'_i\subseteq I_i$ for all $i=1,2, \dots, 7$, but $M[14, 57]$ is not a quotient of $M[145,467]$ because $J'_{1}\not\subseteq J_{1}$.
\end{remark}

\section*{Acknowledgement}

This research was carried out as part of the PACE program in the summer of 2023 at Peking University, Beijing, partially supported by NSFC grant 12426507. We thank Shiliang Gao, Yibo Gao, and Lauren Williams for helpful discussions.

\bibliographystyle{plain}
\bibliography{ref}
\end{document}